\documentclass[11pt,reqno,a4paper]{amsart}
\usepackage[T1]{fontenc}    
\usepackage[utf8]{inputenc} 
\usepackage{lmodern}
\usepackage{amsthm,amsmath,amsfonts,amssymb}
\usepackage{bbm}
\usepackage[foot]{amsaddr}
\usepackage[hidelinks]{hyperref}
\numberwithin{equation}{section}

\theoremstyle{plain}
\newtheorem{thm}{Theorem}[section]
\newtheorem{lem}[thm]{Lemma}

\theoremstyle{definition}
\newtheorem{defn}[thm]{Definition}
\newtheorem{rem}[thm]{Remark}
\newtheorem{rems}[thm]{Remarks}

\newcommand{\E}{{\mathbb{E}}}
\newcommand{\N}{{\mathbb{N}}}
\newcommand{\R}{{\mathbb{R}}}

\renewcommand{\P}{{\mathbb{P}}}

\newcommand{\1}{\mathbbm{1}}  
\newcommand{\T}{{\mathbf T}}
\newcommand{\G}{{\mathbf G}}

\renewcommand{\complement}{{\mkern1mu\mathsf{c}}}

\DeclareMathOperator{\prog}{prog}

\DeclareMathOperator{\AEevent}{{\text{\upshape\AE}}}
\newcommand{\cAE}{c_{\!\AEevent\,}}

\renewcommand{\le}{\leqslant}
\renewcommand{\ge}{\geqslant}
\renewcommand{\leq}{\leqslant}
\renewcommand{\geq}{\geqslant}

\let\emptyset\myemptyset

\DeclareMathOperator{\e}{\mathrm{e}}
\DeclareMathOperator{\tr}{tr}
\DeclareMathOperator{\essinf}{ess\,inf}
\DeclareMathOperator{\id}{1}

\providecommand{\wtilde}{\widetilde}
\providecommand{\what}{\widehat}

\newcommand{\ER}{Erd\H{o}s--R\'enyi}

\usepackage{mathtools} \mathtoolsset{centercolon}   
\usepackage{multirow}

\usepackage{enumitem}

\newlist{longlist}{enumerate}{1}
\setlist[longlist]{label=(\roman*), leftmargin=*, nosep, topsep=0pt}

\begin{document}
	
		\title[Return probability on Bienaym\'e--Galton--Watson trees]{Return probability on Bienaym\'e--Galton--Watson trees and spectral asymptotics of sparse \\ Erd\H{o}s--R\'enyi random graphs}

		\author[Markus Heydenreich]{Markus Heydenreich$^1$}
		\email[Markus Heydenreich]{$^1$markus.heydenreich@uni-a.de}
		\address{\normalfont $^1$Institut für Mathematik, \textsc{Universität Augsburg}, Universitätsstra\ss{e} 14, 86159 Augsburg, Germany}
		\author[Peter M\"uller]{Peter M\"uller$^2$}
		\email[Peter M\"uller]{$^2$mueller@lmu.de}
		\author[Sara Terveer]{Sara Terveer$^3$}
		\email[Sara Terveer]{$^3$terveer@math.lmu.de}
		\address{\normalfont $^{2,3}$Mathematisches Institut, \textsc{Ludwig-Maximilians-Universit\"at M\"unchen}, Theresienstra\ss{e} 39, 80333 M\"unchen, Germany}
		\thanks{This work was partially supported by the Deutsche Forschungsgemeinschaft 
			(DFG, German Research Foundation) -- TRR 352 ``Mathematics of Many-Body Quantum Systems and Their Collective Phenomena" -- Project-ID 470903074.}

		\begin{abstract}
	We derive an upper bound for the annealed return probability of the simple random walk on supercritical Bienaym\'e--Galton--Watson trees. The bound decays subexponentially in time $t$ with $t^{1/3}$ in the exponent. It is valid for all offspring distributions with a finite first moment and is optimal whenever the offspring distribution does not exclude leaves or linear pieces in the tree. This solves completely the cases left open by Piau [Ann.\ Probab.\ \textbf{26}, 1016--1040 (1998)]. 
    A new feature of our proof is a far-reaching flexibility in the location of regions with bad isoperimetric properties in the tree. It allows to efficiently treat general offspring distributions and is gained from the joint consideration of the random tree and the random walk as it is inherent under the annealed measure.
    
    In the special case of a Poissonian offspring distribution we apply the upper bound for the annealed return probability to deduce a Lifshits tail for the empirical eigenvalue distribution of the graph Laplacian on supercritical Erd\H{o}s--R\'enyi random graphs with finite mean degree.
\end{abstract}

	\subjclass[2020]{05C81;60K37;60J80}
	\keywords{Bienaym\'e--Galton--Watson tree, simple random walk, return probability, heat kernel, Erd\H{o}s--R\'enyi random graph, empirical eigenvalue distribution, Lifshits tails}

\maketitle


\section{Introduction}
	
In this section, we introduce our basic notation, present the two main results and discuss their 
relation to results in the literature.

\subsection{Return probability on Bienaym\'e--Galton--Watson trees} 
We consider a Bienaym\'e--Galton--Watson branching process 
\cite{Harris63, AthreyaNey04, LyonsPeres16, Hofstad17} whose offspring distribution is given by 
the $\mathbb N_0$-valued probability measure $\mu$. 
The associated Bienaym\'e--Galton--Watson family tree (BGWT) can be viewed as a probability space
$(\mathbb T,\mathcal F,\mathbb P_\mu^*)$ of rooted labelled tree graphs $\mathbf T\in\mathbb T$ 
with the probability measure $\mathbb P_\mu^*$ arising canonically from the offspring distribution 
$\mu$. In particular, $\mu(k)$ is the probability that a given vertex in $\mathbf T$ has 
$k \in\mathbb{N}_0$ children.

The labelling of the tree graphs is conducted using \emph{Ulam--Harris vertex labelling}, 
see e.g.\ \cite[Sect.\ VI.2.1]{Harris63} or \cite[Sect\ 1.1]{legall05}): 
We consider the countable set $\mathcal X := \bigcup_{k=0}^\infty \N^k$ of all possible labels, where
$\N^0: =\{o\}$ indicates the root and the label $x=(x_1,\dots,x_k)\in \mathcal X$, $k\in\N$, denotes 
child number $x_k$ of child number $x_{k-1}$ of $\ldots$ of child number $x_1$ of the root. 
Each labelled rooted tree $\T\in\mathbb T$ is identified with a subset 
$\mathbf T\subset \mathcal X$, but not each subset of $\mathcal X$ corresponds to a tree. 
The $\sigma$-algebra $\mathcal F$ is generated by the maps 
$\big\{\mathbb T \ni \T \mapsto \1_\T(x)\big\}_{x\in\mathcal X}$, where $\1_M$ denotes the indicator function of a set $M$. 

Throughout, we shall assume that the offspring distribution $\mu$ is non-degenerate and that its first moment exists and 
satisfies 
\begin{equation}
	\lambda\coloneqq\lambda(\mu)\coloneqq\sum_{k\in\mathbb N}k\,\mu(k)>1.
\end{equation}
This implies that the BGWT is supercritical, see e.g.\ \cite[Sect.\ 5.1]{LyonsPeres16}), 
in particular, its extinction probability satisfies
\begin{equation}
    \label{eq:ext-prob-def}
    \Lambda\coloneqq \Lambda(\mu)\coloneqq \mathbb P_\mu^*(|\mathbf T|<\infty)<1.
\end{equation}
Here, and subsequently, we write $|\pmb\cdot|$ to denote the cardinality of a set, and, in slight abuse 
of notation, $|\mathbf T|$ stands for the cardinality of the vertex set of 
$\mathbf T$, that is, the total progeny of the tree. Unless $\mu(0)=0$, which is the case for a BGWT without 
leaves, there is always a positive probability that the BGWT ends up being finite. In this case, we condition 
on the event that the BGWT is infinitely large and denote the resulting conditional measure by 
\begin{equation}
    \label{eq:cond-meas-def}
	\mathbb P_\mu(\,\pmb\cdot\,) \coloneqq \mathbb P_\mu^*\big(\,\pmb\cdot\,\mid |\mathbf T|=\infty\big).
\end{equation}
The expectation with respect to $\mathbb P_\mu$ is denoted by $\E_\mu$, the one with respect to 
$\mathbb P_\mu^*$ by $\E_\mu^*$.
		
We now consider the simple random walk $(X_t)_{t\in\mathbb{N}_0}$ in discrete time $t$ 
on the rooted tree $\mathbf T$ starting at the root $o$. In each step, the random walk moves to a 
uniformly chosen neighbouring vertex in $\mathbf T$ of its current vertex position. The law of 
$(X_t)_{t\in\mathbb{N}_0}$ on the space of trajectories $\T^{\N_{0}}$ is denoted by $P_o^\mathbf T$. 
The Kesten-Stigum theorem \cite{KesteStigu66} shows that under $\mathbb P_\mu$ the number of 
children grows a.s.\ exponentially in the number of generations. Hence the effective resistance 
between the root and infinity vanishes, and thus the random walk is transient \cite{LyonsPeres16}. 
        
We are interested in the annealed return probability to the root at time $t$ of the simple 
random walk on supercritical BGWT conditioned on non-extinction. 
We recall that a random walk on a tree can only return to its starting point after an even number of steps.
In the case $\mu(0)=\mu(1)=0$, where every vertex has at least two children, Piau \cite{Piau98} proved the exponential bounds
    \begin{equation}
        \exp(-c't)
        \leq \E_{\mu}\big[ P_o^{\mathbf T}(X_{t}=o)\big]
        \leq\exp(-ct)\qquad\text{
for even $t\in\N_0$.}\label{eq:piau1}
    \end{equation}
However, if $\mu(0)> 0$ or $\mu(1)>0$, then along the lines of Piau \cite{Piau98}, one obtains the subexponential lower bound 
    \begin{equation}\label{eq:LowerBd}
        \exp\big(-c't^{1/3}\big)
        \leq \E_{\mu}\big[ P_o^{\mathbf T}(X_{t}=o)\big]\qquad\text{
for even $t\in\N_0$.}
    \end{equation}

Our primary result is the complementing upper bound, which is optimal in the power $t^{1/3}$ in the exponent.
		
\begin{thm}\label{thm:returnprobability}	
	Assume that the first moment $\lambda$ of the offspring distribution $\mu$ exists 
    and satisfies  $\lambda>1$.	Then, there exists a constant $c>0$ such that 
	\begin{equation}  \label{eq:thm:returnprobability}
		\E_\mu\big[P_o^{\mathbf T}(X_t=x)\big]
        \leq \exp\big(-ct^{1/3}\big)
	\end{equation}
	for every $t\in\mathbb{N}_0$ and every $x\in\mathcal X$.
\end{thm}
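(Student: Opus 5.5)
First reduce to the return probability: by reversibility with respect to the degree measure and Cauchy--Schwarz, $P_o^{\T}(X_t=x)\le \sqrt{\deg(x)/\deg(o)}\,\bigl(P_o^{\T}(X_{2\lceil t/2\rceil}=o)\,P_x^{\T}(X_{2\lceil t/2\rceil}=x)\bigr)^{1/2}$, up to a harmless shift in time. This transfers the problem to return probabilities, but it also introduces two nuisances. The degree factor can be controlled by taking expectations and using only the finite first moment of $\mu$ (via Hölder, or by truncating on $\deg(x)\le \e^{t^{1/3}}$). The return probability from a non-root vertex $x$ is handled by re-rooting: under the annealed measure, the subtree of descendants of $x$ is again an independent BGWT. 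So it suffices to prove $\E_\mu[P_o^{\T}(X_{2t}=o)]\le \exp(-ct^{1/3})$, together with the analogous bound for the walk started at a vertex whose descendant tree is an unconditioned BGWT. The $t$-uniformity in $x$ must come from the fact that the bound is obtained on the descendant tree alone, independently of the ancestry.

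For the main estimate, write $\T$ as its backbone (the reduced tree of vertices with infinite lines of descent) decorated with finite bushes. By the standard decomposition, conditioned on survival the backbone is a BGWT with offspring law $\tilde\mu$ satisfying $\tilde\mu(0)=0$. It may still have $\tilde\mu(1)>0$, so there can be long linear pieces. I would use an isoperimetric (Nash/Faber--Krahn type) argument. The return probability after $t$ steps is bounded by $\exp(-c\,t\,\Phi^2)$ when the walk stays in a region of conductance at least $\Phi$, and otherwise by the probability that the walk spends a long time in \emph{bad} regions. A bad region is a finite bush of size $\ell$ or a backbone segment of length $\ell$ with no branching. Such a region occurs near a given vertex with probability $\exp(-c\ell)$, and the walk needs time of order $\ell^2$ to be trapped there usefully. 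The first attempt is then the following. Let $\ell=t^{1/3}$. Either the walk in $t$ steps reaches distance of order $t^{1/3}$ along the backbone, or it is confined. Confinement to a ball of radius $r$ in a tree with exponential growth costs $\exp(-t/r^2)$ unless bad regions of size about $r$ are present, and those cost $\exp(-cr)$. Optimising $t/r^2+r$ gives $r=t^{1/3}$.

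The key step is an annealed bound. I would explore the tree jointly with the walk, revealing the offspring of each vertex only when the walk first visits it. At each newly reached vertex the local environment is fresh and independent, so the event that the walk meets only "good" neighbourhoods (bushes and linear pieces of size at most $t^{1/3}$) for a sufficient number of steps can be estimated by a Markov-chain-type argument. On good stretches the distance to the root has a positive drift, given by $\lambda>1$ on the backbone, with a time delay of order $\ell^2$ per bush. The walk must then climb back from height $h$ to the root, and the probability of doing so is exponentially small in $h$. The main obstacle is exactly this step: making the fresh-environment decoupling rigorous while the bad regions can sit anywhere along the path, and not only near the root. This is where a union bound over the at most $t$ locations must be absorbed into $\exp(-ct^{1/3})$. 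The heavy tails of $\mu$, where only the first moment is assumed, must be handled by truncating degrees, since large degrees only help escape.
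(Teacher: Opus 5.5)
Your reduction to the return probability at the root is harmless. After Cauchy--Schwarz in $\E_\mu$ you may simply bound $P_x^{\T}(X_{2n}=x)\le 1$. That is just as well, because your re-rooting claim is not correct: the walk started at $x$ also explores the ancestry of $x$, so its return probability is not a functional of $\prog(x)$ alone.

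The genuine gap is the annealed step, which you yourself flag as the main obstacle and leave open. The mechanism you sketch (positive drift on good stretches, delays of order $\ell^2$ per bush, an exponentially small probability to climb back from height $h$) is essentially Piau's regeneration and slow-down bookkeeping. Once leaves are allowed, that approach only produced the exponents $1/5$ and $1/6$, and nothing in your sketch makes the ``fresh environment'' precise. The missing idea is a decoupling that needs neither drift nor a union bound over locations:
\begin{itemize}
\item On the bad event, let $y$ be the vertex closest to the root of a bad connected set $S$. Since $\T$ is a tree, $S\subseteq\prog(y)$, and the walk first touches $S$ at time $s=\tau_y\le t$.
\item Decompose over $s$ and over the position \emph{and type} (survivor or extinct) of $X_s$.
\item The quantity $P_o^{\T}(\tau_y=s)$ depends only on the tree outside $\prog(y)$. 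The event that $\prog(y)$ contains a set $S\ni y$ with $|S|\ge t^{1/3}$ and small isoperimetric ratio (ignoring the edge from $y$ to its parent) depends only on $\prog(y)$.
\item The two-type tree Markov property \eqref{eq:Markov} makes these independent given $y\in\T_f$. The type is needed because $\P_\mu$ is conditioned on survival.
\item Moreover $\E_\mu[P_o^{\T}(\tau_y=s)\mid y\in\T_f]\le\E_\mu[P_o^{\T}(X_s=y)\mid y\in\T_f]$.
\end{itemize}
So for each fixed $s$, the conditional probability of hitting a bad set at time $s$ is at most the supremum over $(y,f)$ of the probability of the progeny event given $y\in\T_f$. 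Only the factor $t+1$ from the sum over $s$ remains.

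Second, your notion of bad region is too narrow for the isoperimetric half of the argument. Take a backbone segment of length $L<\ell$ without branching whose vertices each carry a bush of size $m<\ell$. It is ``good'' in your classification, yet it has two boundary edges and about $Lm$ vertices. For $Lm\gg\ell$ its isoperimetric ratio is far below $t^{-1/3}$, so on your good event there is no conductance bound $\Phi\asymp t^{-1/3}$, and hence no estimate $\exp(-ct\Phi^2)$.

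Bad regions must be defined isoperimetrically, as the $q_t$-isolated cores with $q_t=qt^{-1/3}$, that is, finite connected sets with $|\partial S|/|S|<q_t$. These automatically satisfy $|S|>t^{1/3}/q$. The bound $\exp(-ct^{1/3})$ for such a set hanging below a survivor is then the uniform anchored-expansion estimate of Lemma~\ref{lem:ms24_lem25}. It needs only a first moment, so no degree truncation is required. Below an extinct vertex, the finite-tree tail \eqref{eq:finitetreetails} suffices.

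On the complementary event the estimate is deterministic, for every tree and every $x$. A path that avoids all $q_t$-islands has the same probability under the induced walk on the $q_t$-oceans. That walk's Markov operator has norm at most $1-q_t^2/18$ by Theorem~\ref{thm:markovkernelbound}. This gives $\sqrt{\deg(x)/\deg(o)}\,\exp(-q^2t^{1/3}/18)$ directly, with no reduction to $x=o$ and no escape or height estimates.
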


\begin{rems}
    \begin{longlist}
    \item
        The statement of Theorem~\ref{thm:returnprobability} improves previously
        established upper bounds: 
        Piau \cite[Thm.~2]{Piau98} employed a regeneration-time analysis to prove the 
        optimal power in the exponent for leaf-free trees, as well as weaker upper 
        bounds for general offspring distributions:
        \begin{align}
            \begin{array}{rll}
        \text{(a)}&\text{ if }\mu(0)=0:&\E_\mu\big[P_o^{\mathbf T}(X_t=o)\big]\leq\exp\big(-ct^{1/3}\big).\\[.5ex]
		\text{(b)}&\text{ if $\mu$ is finitely supported: }\,&\E_\mu\big[P_o^{\mathbf T}(X_t=o)\big]\leq\exp\big(-ct^{1/5}\big).\\[.5ex]
		\text{(c)}&\text{ for general }\mu:&\E_\mu\big[P_o^{\mathbf T}(X_t=o)\big]\leq\exp\big(-ct^{1/6}\big).
            \end{array}
            \label{eq:piau2}
        \end{align}
        The upper bounds in (b) and (c) rely on detailed estimates how (large) finite side trees cause a slowing down of the random walk as compared to its motion on the backbone of the tree. 
    \item
        Vir\'ag \cite{Virag00} pioneered an alternative approach to return and transition probabilities 
        via anchored isoperimetric properties of the underlying graph. He proves an upper bound with an exponential decay in $t^{1/3}$ for the transition probability of the simple random walk on a general deterministic graph of bounded degree and positive anchored expansion constant, see \eqref{T-exp-const} for the definition. 
        Together with the almost-sure positivity of the anchored expansion constant for supercritical BGWTs \cite[Cor.\ 1.3]{ChenPeres}, see also \cite[Thm.\ 6.52]{LyonsPeres16}, this yields exponential decay in $t^{1/3}$ of the \emph{quenched} transition probability for the simple random walk on supercritical BGWTs with finitely supported $\mu$. However, these results do not imply a bound for the \emph{annealed} situation, which we consider here. This is because the initial time from which Vir\'ag's bound applies depends on the underlying graph in an intricate way.  
     
        To overcome this difficulty, the recent paper \cite{MullerStern24} develops a probabilistic version of Vir\'ag's approach. This yields the optimal power for BGWTs with finitely supported offspring distributions and an improvement of Piau's upper bound for very fast-decaying $\mu$:
		\begin{align}
            \label{eq:mullerstern}        
		      \text{\begin{tabular}{rll}
		          (b')&if $\mu$ is finitely supported: 
                    &$\E_\mu\big[P_o^{\mathbf T}(X_t=o)\big]\leq\exp\big(-ct^{1/3}\big)$.\\
		          (c')&if $\mu$ has very fast decay, i.e.&\multirow{2}{*}{$
                        \E_\mu\big[P_o^{\mathbf T}(X_t=o)\big]
                        \leq \exp\big(-ct^{(1 -2/k) /3}\big)$.}\\
                    & $\mu(j)\leq c_1\cdot \exp(-c_2 j^k)$, $k>2$:
	        \end{tabular}}
	    \end{align}
    \item 
        Theorem~\ref{thm:returnprobability} establishes the optimal exponent $t^{1/3}$ in 
        full generality and closes the gap left out by previous results. 
        The proof, which turns out to be surprisingly short and simple, is isoperimetric 
        in spirit and uses ideas from \cite{Virag00} and \cite{MullerStern24}.
        A central new feature in the proof of Theorem~\ref{thm:returnprobability} in 
        comparison to earlier papers is a far-reaching flexibility in the 
        location of regions in the tree with bad isoperimetric properties. 
        It is gained from the joint consideration of the random 
        tree and the random walk as it is inherent under the annealed measure. It is this 
        flexibility which allows to handle general offspring distributions.
    \item
        The Poisson case $\mu=\mathrm{Poi}(\lambda)$ for the offspring distribution
        is of particular importance, because such BGWTs arise as the local weak limit of 
        Erd\H{o}s--R\'enyi random graphs, see e.g.\ \cite[Theorem 2.18]{Hofstad24}.          
        Theorem~\ref{thm:returnprobability} covers this case for the first time and provides 
        the exact exponent $t^{1/3}$. We present an application of Theorem~\ref{thm:returnprobability} towards 
        Erd\H{o}s--R\'enyi random graphs in Section~\ref{sec:thm-lifshits}.
    \item 
        It is an open problem to get refined estimates which establish
        \eqref{eq:thm:returnprobability} and \eqref{eq:LowerBd} with the same constant
        multiplying $t^{1/3}$ in the exponent asymptotically as $t\to\infty$. 
    \item
        The bound of Theorem~\ref{thm:returnprobability} carries over to the two canonical variants of 
        continuous-time random walks on supercritical BGWTs: 
        the one with i.i.d.\ waiting-time clocks attached to the vertices, which is generated by the normalised Laplacian,  
        and the other one with i.i.d.\ waiting-time clocks attached to the edges, which is generated by the Laplacian. 
        We explore upper and lower bounds for the annealed return probabilities of these continuous-time random walks in a forthcoming publication \cite{HeMuTe-inprep}.
    \item 
        Interestingly, the case of \emph{biased} random walks (i.e., random walks with 
        drift) on BGWTs has attracted significant attention in the literature, e.g.\ 
        \cite{Aidekon14,BenArousFribergh16, DemboEtAl02,LyonsPemantlePeres96}. Their 
        results are mostly concerned with the effective speed of the walk.
    \item 
        Almost-sure positivity of the speed of the random walk on an exponentially 
        growing stationary random graph was studied in \cite{BePaPf18}. This paper also 
        uses the anchored expansion approach, and the authors were able to effectively trade the bounded-degree assumption of \cite{Virag00} for the assumptions of stationarity and exponential growth.
    \end{longlist}
\end{rems}

Apart from answering a natural probabilistic question, Theorem~\ref{thm:returnprobability}
has also relevance in the spectral theory of random graphs, as we explain in the next 
section.

\subsection{Empirical eigenvalue distribution of the Laplacian on finite-mean-degree Erd\H{o}s--R\'enyi random graphs} 
\label{sec:thm-lifshits}

The ensemble $\mathcal{G}(N, p(N))$ of \ER{} random graphs, as introduced by \cite{Gil59}, 
consists of all simple, undirected graphs $\mathbf G^{(N)}$ with 
$N\in\mathbb{N}\setminus\{1\}$ vertices enumerated by $1$ to $N$
and randomly chosen edges between all $\binom{N}{2}$ pairs of vertices  \cite{Bol01, ErRe60}. 
These edges are drawn independently with 
probability $p\coloneqq p(N) \in {}]0,1[$. 
We refer to the choice $p(N ) \coloneqq \lambda/N$ with
$\lambda \in \,]0, \infty[\,$ as \emph{finite-mean-degree \ER{} random graphs} because, asymptotically for large $N$, the average total number of edges grows linearly with the number of vertices $N$, and the average degree of any given vertex equals $\lambda$. 
Finite-mean-degree \ER{} random graphs $\mathbf G^{(N)}$ exhibit a structural, percolation-like phase transition at $\lambda=1$.
In contrast to the subcritical regime $\lambda \in {}]0,1[\,$, the supercritical regime $\lambda \in {}]1,\infty[\,$ features an emerging
unique \emph{giant cluster} $\mathbf C_{\max}^{(N)}$ as $N\to\infty$ \cite{Bol01,ErRe60} which absorbs a positive fraction of all 
vertices. Here, we say that a subgraph of $\G^{(N)}$ is a cluster, if it is a connected component of $\G^{(N)}$. By
convention, we want to include isolated vertices as one-vertex clusters in this notion, too. 
The finite-mean-degree case is of great interest in various applications of \ER{} random graphs in physics and other disciplines. However, from a spectral point of view, it is mathematically considerably more challenging and therefore much less understood than cases in which $Np(N) \rightarrow \infty$.  
By way of example, we mention the papers \cite{MR4255147, MR4328063, MR4375917, MR4515695, MR4691859, MR3945756, MR2964770, MR3098073, MR4288336, MR4021251} to illustrate the tremendous progress achieved over the last decade for the spectral properties of the $N\times N$-self-adjoint adjacency matrix $A^{(N)} \coloneqq (A^{(N)}_{jk})_{1 \le j,k \le N}$,
\begin{equation}
    A^{(N)}_{jk} \coloneqq \left\{\begin{array}{@{}l@{\;\;}l}1, & \text{if vertices $j$ and $k$ are 
        connected by an edge in $\G^{(N)}$,} \\	0, & \text{otherwise,} \end{array} \right.
\end{equation}
of \ER{} random graphs with $Np(N) \rightarrow \infty$ and how the frontier was pushed towards increasing sparsity. In comparison, the rigid constraints of vanishing row and column sums hamper studies \cite{MR2206341, MR4058984, MR2967963} of the spectral properties of the self-adjoint graph Laplacian 
\begin{equation}
    \label{eq:Laplace-def}
    \Delta^{(N)} \coloneqq D^{(N)} - A^{(N)}
\end{equation} 
or related operators even in the simpler cases when $Np(N) \rightarrow \infty$. Here,
\begin{equation}
    D^{(N)} \coloneqq \mathrm{diag}\big(\deg_{\G^{(N)}}(1), \ldots, \deg_{\G^{(N)}}(N)\big)
\end{equation}
denotes the diagonal $N\times N$-matrix of the vertex degrees in $\G^{(N)}$.
We emphasize that the powerful methods and techniques which prove useful when $Np(N)\to \infty$ mostly do not apply in the finite-mean-degree case of \ER{} random graphs. The few results which were previously known in the finite-degree case will be discussed after Theorem~\ref{thm:lifshits}. 
The quantity of our interest here is defined in 
\begin{lem}
    \label{lem:DOS-exist}
    Let $\lambda>0$. Then there exists a Borel probability measure $\sigma_\lambda$ on $\R$, 
    the \emph{density-of-states measure} (or averaged empirical eigenvalue distribution) of the 
    graph Laplacian on finite-mean-degree \ER{} random graphs, such that the sequence 
    $\big(\sigma_\lambda^{(N)}\big)_{N}$ of eigenvalue counting measures, defined by  
 	\begin{equation}
		\label{eq:DOS-def}
        \sigma_\lambda^{(N)}(B) \coloneqq \frac{1}{N} \; \E^{(N)}_{\lambda} \Big[ \mathrm{tr}_{\mathbb C^{N}}   \big\{ \1_{B}(\Delta^{(N)}) \big\} \Big] 
	\end{equation}   
    for $N\in\N\setminus\{1\}$ and any Borel set $B \subseteq \R$, 
    converges weakly to $\sigma_\lambda$ as $N\to\infty$.
    In \eqref{eq:DOS-def}, $\E^{(N)}_{\lambda}$ denotes the average over the \ER{} ensemble 
    $\mathcal G(N,\lambda/N)$, and the trace $\mathrm{tr}_{\mathbb C^{N}}$ of the spectral projection 
    $\1_{B}(\Delta^{(N)})$ counts the number of eigenvalues of $\Delta^{(N)}$, including their multiplicities, in the set $B$.
\end{lem}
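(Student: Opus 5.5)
The plan is the method of moments combined with local weak convergence of $\mathcal G(N,\lambda/N)$ to the Poisson BGWT. I work with the unconditioned measure $\mathbb P^*_{\mathrm{Poi}(\lambda)}$, so the statement holds for every $\lambda>0$. For each $k\in\N_0$ the $k$-th moment of $\sigma^{(N)}_\lambda$ is
\[
m_k^{(N)} \coloneqq \int x^k\,\sigma_\lambda^{(N)}(dx) = \frac1N\,\E^{(N)}_\lambda\big[\tr (\Delta^{(N)})^k\big] = \E^{(N)}_\lambda\big[\big((\Delta^{(N)})^k\big)_{11}\big],
\]
where the last equality uses exchangeability of the vertices. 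I expand $\big((\Delta^{(N)})^k\big)_{11}$ as a sum over closed walks of length $k$ starting at vertex $1$. Each step is either a diagonal step, carrying weight $\deg(v)$, or a move along an edge, carrying weight $-1$. This entry depends only on the ball of radius $k$ around vertex $1$, apart from the degrees of the vertices in the ball of radius $k-1$, and those degrees are themselves determined by the radius-$k$ ball. Hence $\big((\Delta^{(N)})^k\big)_{11}=F_k(B_k(1))$ for a function $F_k$ of rooted graphs with $|F_k|\le (2\max_{v\in B_k}\deg v)^k$.

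First step: identify the limit of each moment. By local weak convergence (cf.\ \cite[Theorem 2.18]{Hofstad24}), $B_k(1)$ converges in distribution to the ball of radius $k$ of the Poisson BGWT under $\mathbb P^*_{\mathrm{Poi}(\lambda)}$. To pass to the limit in expectation I need uniform integrability. This follows from a uniform bound $\sup_N \E^{(N)}_\lambda\big[(\max_{v\in B_k(1)}\deg v)^{2k}\big]<\infty$. I would prove it by bounding the size of $B_k(1)$, together with the degrees inside it, through domination of the breadth-first exploration by a $\mathrm{Bin}(N,\lambda/N)$ branching process. That process has moments of all orders, bounded uniformly in $N$, since $\mathrm{Bin}(N,\lambda/N)$ has exponential moments bounded uniformly in $N$. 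Consequently $m_k^{(N)}\to m_k \coloneqq \E^*_{\mathrm{Poi}(\lambda)}\big[(\Delta_{\T})^k_{oo}\big]$, where $\Delta_\T$ is the Laplacian on the tree.

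Second step: show that the moments $(m_k)$ determine a unique measure, and then conclude. Let $D$ be the maximal degree in the radius-$k$ ball of the tree. Then $|m_k|\le \E^*[(2D)^k]$, and a crude estimate gives $\E^*[D^k]\le \E^*[|B_k|\,\max\deg^k]$. I expect this step to be the main obstacle, because Poisson degree moments grow like $k^k$ and the ball size also contributes. Carleman's condition $\sum_k m_{2k}^{-1/(2k)}=\infty$ only requires $m_{2k}\le (Ck)^{2k}$. I would aim for exactly this bound via Hölder's inequality: $\E^*[D^{2k}]\le \E^*\big[\sum_{v\in B_{2k}}\deg(v)^{2k}\big]$, the expected number of vertices in the ball is at most $(1+\lambda)^{2k+1}$, and each degree has the Poisson $2k$-th moment of order $(Ck)^{2k}$. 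Size-biasing along the path from the root changes the degree law only to $1+\mathrm{Poi}(\lambda)$, so this bound should survive. The $m_k^{(N)}$ obey the same bounds, uniformly in $N$.

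With Carleman's condition in hand, the standard moment theorem completes the argument. The measures $\sigma^{(N)}_\lambda$ are tight because their second moments are bounded. Every subsequential limit has moments $(m_k)$, again by uniform integrability, since the moments of order $2k+2$ are bounded. Since these moments determine the measure uniquely, all subsequential limits coincide, and the whole sequence converges weakly to a single probability measure $\sigma_\lambda$. As a by-product, $\sigma_\lambda$ is characterised by $\int x^k\,d\sigma_\lambda=\E^*_{\mathrm{Poi}(\lambda)}\big[(\Delta_\T)^k_{oo}\big]$, that is, $\sigma_\lambda(B)=\E^*_{\mathrm{Poi}(\lambda)}\big[\langle\delta_o,\1_B(\Delta_\T)\delta_o\rangle\big]$.
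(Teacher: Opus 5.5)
Your proposal is correct, and it takes a somewhat different route from the one the paper points to. The paper gives no proof of this lemma. It cites the moment method (Lemma~2.3 in \cite{KiKhMu06}) and, as an alternative, the local-weak-limit route: there, weak convergence and the identification \eqref{BJ-limit} follow along the lines of \cite[Prop.~2.2(ii)]{Bor17} from local weak convergence together with essential self-adjointness of $\Delta_\T$ on finitely supported vectors.

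Your argument combines the two. It is a moment-method proof, but you obtain the limits of the moments from local weak convergence and uniform integrability. The latter rests on $\deg v\le|B_{k+1}(1)|$ for $v\in B_k(1)$ and on dominating $|B_{k+1}(1)|$ by a $\mathrm{Bin}(N,\lambda/N)$ branching process.

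Compared with the route through \cite{Bor17}, this costs you moment determinacy of $(m_k)$, and your Carleman bound supplies it correctly. The bound is really just $\max\le\sum$ combined with the many-to-one identity $\E^*_{\mathrm{Poi}(\lambda)}\big[\sum_{v\in\T\cap\N^j}\deg_\T(v)^{2k}\big]\le\lambda^j\,\E\big[(1+\xi)^{2k}\big]$ for $\xi\sim\mathrm{Poi}(\lambda)$, together with $\E\big[(1+\xi)^{2k}\big]\le(2k)!\,\mathrm{e}^{1+\lambda(\mathrm{e}-1)}$. So neither H\"older's inequality nor size-biasing is needed for the unconditioned tree.

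In return, existence of $\sigma_\lambda$ needs no operator theory on the infinite tree, and \eqref{BJ-limit} comes as a by-product. Since $\Delta_\T$ maps finitely supported vectors to finitely supported ones, $\delta_o$ lies in the domain of every power of any self-adjoint extension. Hence the averaged spectral measure at $o$ has moments $(m_k)$ and equals $\sigma_\lambda$ by determinacy.
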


The traditional proof of Lemma~\ref{lem:DOS-exist} argues via moments, see e.g.\ Lemma~2.3 in \cite{KiKhMu06}.
A more recent (and fruitful) approach is based on the fact that the \emph{local weak limit} 
(or Benjamini--Schramm limit) of  the ensemble of \ER{} random graphs $\mathcal G(N,\lambda/N)$ is given by the ensemble of Bienaym\'e--Galton--Watson trees with Poissonian offspring distribution $\mu=\mathrm{Poi}(\lambda)$ with mean $\lambda$, see e.g.\ \cite[Thm.\ 2.18]{Hofstad24} or \cite[Sect.\ 2.2]{Bor17}. 
In order to explore this direction, we recall  \cite{Jor08,KeLe10} that for $\P^*_{\mathrm{Poi}(\lambda)}$-almost every tree 
$\T$, the graph Laplacian $\Delta_\T$ is an essentially self-adjoint operator when defined on the 
subspace of finitely supported vectors $ f\coloneqq \big(f(x)\big)_{x\in\T}$ in the Hilbert space $\ell^2(\T)$
of complex-valued, square-summable sequences over the vertex set of $\T$. Given such $f$, it acts as 
\begin{equation}
    \label{eq:Delta-unbounded}
    (\Delta_\T f)(x) \coloneqq \deg_\T (x) f(x) - \sum_{\substack{y\in \T \,:\;  \{x,y\} \\ \text{is an edge in } \T}} f(y) \qquad \text{for every $x\in\T$}.
\end{equation}
(This is only relevant for $\lambda>1$ because if $\T$ is finite, then the definition of the graph Laplacian by \eqref{eq:Delta-unbounded} is equivalent to the one in \eqref{eq:Laplace-def} with 
$\G^{(N)}=\T$.) Therefore, the expectation in \eqref{BJ-limit} of the scalar product $\langle\pmb\cdot,\pmb\cdot\rangle$ on $\ell^2(\T)$ with the canonical basis vector $\delta_o \in \ell^2(\T)$, where, more generally $\delta_x(y) \coloneqq \1_{\{x\}}(y)$ for every $x,y\in\T$, is well defined, and the equality 
\begin{equation}
    \label{BJ-limit}
	\sigma_\lambda = \E^*_{\mathrm{Poi}(\lambda)} \big[\langle \delta_{o}, \1_{\bullet}(\Delta_{\T}) 
    \delta_{o}\rangle \big]
\end{equation}
follows from the uniqueness of weak limits along the lines in the proof of \cite[Prop.\ 2.2 (ii)]{Bor17}, which is carried out for the adjacency operator there. It works equally well for the Laplacian because all that matters is essential self-adjointness on the subspace of finitely supported vectors and locality of the operator. 

According to \cite{Salez20}, an explicit description of $\sigma_\lambda$ ``seems out of reach at present, and our understanding remains extremely limited.'' 
The second main result of this paper establishes a Lifshits-tail behaviour of 
$\sigma_\lambda$ at the lower spectral edge $E=0$ of the Laplacian's spectrum. It answers an open question posed 20 years ago in \cite{KiKhMu06}.

\begin{thm}
	\label{thm:lifshits}
	Let $\lambda>1$. Then there exist constants $E_{0}>0$ and $c' \ge c >0$ such that 
	\begin{equation}
        \label{eq:thm:lifshits}
		  \exp\big(-c'E^{-1/2}\big) \le \sigma_{\lambda}(\,]0,E]) \le \exp\big(-cE^{-1/2}\big) 
	\end{equation}
	for every $E \in \,]0, E_{0}]$. 
\end{thm}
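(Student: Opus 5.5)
The statement to prove is Theorem~\ref{thm:lifshits}, the two-sided Lifshits tail for $\sigma_\lambda$ at $E=0$. Both bounds go through the representation \eqref{BJ-limit}, $\sigma_\lambda(]0,E]) = \E^*_{\mathrm{Poi}(\lambda)}[\langle\delta_o,\1_{]0,E]}(\Delta_\T)\delta_o\rangle]$. The term at $E=0$ needs care. On finite trees, $0$ is an eigenvalue with eigenvector the normalised constant, contributing $1/|\T|$. On infinite trees, $0$ is not an eigenvalue, by transience, and $\sigma_\lambda(\{0\})=\E^*[1/|\T|\,;|\T|<\infty]$. We treat the finite-tree part and the infinite-tree part separately.

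\textbf{Finite trees.} Under $\P^*_{\mathrm{Poi}(\lambda)}$ with $\lambda>1$, the tree conditioned on extinction is a subcritical Poisson tree, so $\P^*(|\T|=n)\le \e^{-\kappa n}$. Cheeger's inequality on a tree with $n$ vertices gives spectral gap at least $c/n^2$ for the Laplacian. So eigenvalues in $]0,E]$ force $n\ge cE^{-1/2}$. Bounding the diagonal entry by $1$, this part is at most $\sum_{n\ge cE^{-1/2}}\e^{-\kappa n}\le \exp(-c''E^{-1/2})$.

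\textbf{Upper bound on infinite trees.} We pass from the Laplacian to the random walk. With $P=D^{-1}A$, the operator $\mathcal L:=D^{-1/2}\Delta D^{-1/2}=1-D^{1/2}PD^{-1/2}$ is the normalised Laplacian. I would use a Chebyshev/heat-kernel step:
\[
\langle\delta_o,\1_{[0,E]}(\Delta_\T)\delta_o\rangle \le \e^{sE}\,\langle\delta_o,\e^{-s\Delta_\T}\delta_o\rangle ,
\]
and then bound the continuous-time return probability of the edge-clock walk generated by $\Delta_\T$ by Theorem~\ref{thm:returnprobability}. The remarks indicate this carries over, but the degrees are unbounded, so I'd rather compare directly. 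Since $\Delta\ge$ (small constant)$\cdot\mathcal L$ fails when degrees are small, it is cleaner to use the spectral-variational comparison: on $\ell^2$ with weights, $\Delta$ and $\mathcal L$ relate via $D$, and $\deg\ge1$ on infinite trees gives $\langle f,\Delta f\rangle\ge\langle f,\mathcal L f\rangle_{D}$. This yields $\1_{[0,E]}(\Delta)$ controlled by $\1_{[0,E]}$ of the walk operator in the form $\langle\delta_o,\1\delta_o\rangle\le \deg(o)\,\langle\delta_o,(1-\mathcal L/2)^{2t}\delta_o\rangle(1-E/2)^{-2t}$. I expect to have to justify this through a min-max/trace-type argument, which is the main technical obstacle.

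The right-hand side is $\deg(o)\,P_o^\T(\tilde X_{2t}=o)$ for the lazy walk. The lazy walk is a binomial time change of $X$, so Theorem~\ref{thm:returnprobability} applies after averaging. The factor $\deg(o)$ is removed by Cauchy–Schwarz, since Poisson moments are finite; alternatively one conditions on $\deg(o)$, as the theorem's bound is uniform. This gives the estimate $\exp(tE)\exp(-ct^{1/3})$. Optimizing with $t\approx E^{-3/2}$ then yields $\exp(-cE^{-1/2})$.

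\textbf{Lower bound.} Here it suffices to exhibit trees with small eigenvalues. With probability at least $\e^{-Cn}$, the root lies at the centre of a path segment of length $n\sim E^{-1/2}$ with no extra branches, attached to an infinite remainder. A test function $f$ that is a sine bump on the path satisfies $\langle f,\Delta f\rangle\le C/n^2\,\|f\|^2$ and $|f(o)|^2\ge c\|f\|^2/n$. The spectral measure at $\delta_o$ then gives mass at least $c/n$ to $[0,C/n^2]$ up to a standard Chebyshev correction. Since $\{0\}$ carries no mass on infinite trees, this gives a lower bound $\exp(-c'E^{-1/2})$ for $\sigma_\lambda(]0,E])$.
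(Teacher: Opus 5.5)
Your finite-tree part of the upper bound is fine and self-contained. For the spectral gap $\ge c/n^2$, quote $\lambda_2\ge 4/(n\,\mathrm{diam})$ rather than the combinatorial Cheeger inequality, which brings in the maximal degree. The other two steps, however, have genuine gaps.

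\textbf{Upper bound on infinite trees.} For a fixed tree you need the inequality $\langle\delta_o,\1_{[0,E]}(\Delta_\T)\delta_o\rangle\le\deg(o)\,\langle\delta_o,(1-\mathcal L/2)^{2t}\delta_o\rangle(1-E/2)^{-2t}$. This is the crux of the theorem, and a min--max/trace argument cannot deliver it.
\begin{itemize}
\item Min--max, or equivalently the factorisation $\wtilde\Delta=a^*a$, $aa^*\le\Delta$ with $a=\Delta^{1/2}D^{-1/2}$, only compares eigenvalue counting functions, i.e.\ traces of spectral projections.
\item Single diagonal entries of spectral projections are not monotone in the operator.
\item The partial isometry intertwining $aa^*$ with $a^*a$ does not fix $\delta_o$.
\end{itemize}
On a fixed infinite tree there is no trace to apply this to, so the step is unproved.

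The paper compares only averaged quantities. On the finite giant cluster $\mathbf C_{\max}^{(N)}$, Lemma~\ref{lem:Delta-DeltaSchlange} gives the trace inequality. Local weak convergence of $\mathbf C_{\max}^{(N)}$ to the survival-conditioned $\mathrm{Poi}(\lambda)$-tree then turns normalised traces into $\E_{\mathrm{Poi}(\lambda)}[\langle\delta_o,\1_{]0,E]}(\cdot)\delta_o\rangle]$. This yields the comparison with $\wtilde\Delta_\T$, without any $\deg(o)$ factor, to which Lemma~\ref{lem:return-specproj} applies.

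The heat-kernel step you wrote down first and then discarded would actually close the gap, more simply than the paper does. Unbounded degrees only speed up the edge-clock walk. Since $\deg\ge 1$, its holding times are $\xi_i/\deg(X_i)\le\xi_i$, with $\xi_i$ i.i.d.\ exponential(1) and independent of $X$. Hence by time $s$ at least $N_s\sim\mathrm{Poi}(s)$ steps of the simple random walk have occurred, and
\[
\langle\delta_o,\e^{-s\Delta_\T}\delta_o\rangle\le P(N_s<s/2)+\sum_{k\ge s/2}P_o^\T(X_k=o).
\]
This holds even if explosion were possible, because the semigroup of the essentially self-adjoint $\Delta_\T$ is the minimal one. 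Averaging with Theorem~\ref{thm:returnprobability} and choosing $s=\kappa E^{-3/2}$ with small $\kappa$ gives $\exp(-\alpha E^{-1/2})$.

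\textbf{Lower bound.} Let $\nu_o$ be the spectral measure of $\Delta_\T$ at $\delta_o$. From $\langle f,\Delta_\T f\rangle\le Cn^{-2}\|f\|^2$ and $|f(o)|^2\ge c\|f\|^2/n$, splitting $f$ along $\1_{[0,E]}(\Delta_\T)$ only gives
\[
\nu_o([0,E])^{1/2}\ge\sqrt{c/n}-\sqrt{C/(n^2E)}.
\]
This is informative only for $E>C/(cn)$, which leads to $\exp(-c'E^{-1})$, the wrong exponent. The root carries only a $1/n$ share of $\|f\|^2$, so an energy bound cannot keep the $\delta_o$-component of $f$ off the spectrum above $C/n^2$. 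Smoother bumps and higher powers of $\Delta_\T$ only reach $E\sim n^{-2+\epsilon}$.

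Use finite trees instead, as the paper does via Lemma~\ref{lem:finite-trees}. A path of $n$ vertices rooted at an endpoint has $\P^*_{\mathrm{Poi}(\lambda)}$-probability $(\lambda\e^{-\lambda})^{n-1}\e^{-\lambda}$. Its spectral measure at $\delta_o$ puts mass $\frac{2}{n}\cos^2\frac{\pi}{2n}\ge\frac1n$ on the eigenvalue $2(1-\cos\frac{\pi}{n})\le\pi^2/n^2$. Taking $n\approx\pi E^{-1/2}$ gives the claimed bound.
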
 

\begin{rems}
    \begin{longlist}
    \item
        The statement of Theorem \ref{thm:lifshits} also holds in the subcritical phase 
        $\lambda \in \,]0,1[\,$, as was proved in \cite{KiKhMu06}. Its validity in the supercritical 
        phase requires additional control of the contribution of the giant cluster, which we obtain 
        from Theorem~\ref{thm:returnprobability}. The very fast decay with $E^{-1/2}$ in the exponential 
        as $E\downarrow 0$ reflects that the dominant contribution of small Laplacian eigenvalues is due 
        to long linear pieces in the graphs. The critical case $\lambda=1$ has not been studied so far. 
    \item
	    The high-energy asymptotics 
        \begin{equation}
            \lim_{E\to\infty} \frac{- \ln\big(1- \sigma_\lambda(\,]-\infty,E])\big)}{E \,\ln E} = 1
        \end{equation}
        is known for every $\lambda>0$ \cite{Sattler10}. Morally, very large eigenvalues $E$ of the 
        Laplacian are probabilistically suppressed as $1/E!$ (understood as Euler's Gamma function) because 
        they require large vertex degrees of the order of $E$, and the degree distribution is Poissonian.
    \item 
        The density-of-states measure $\sigma_\lambda$ possesses a dense countable set of atoms at algebraic 
        integers, as follows from the early physics paper \cite{ChChFr86}. The atom at zero
        \begin{equation}
            \label{eq:conn-comp}
            \sigma_\lambda(\{0\}) = \E^*_{\mathrm{Poi}(\lambda)} \big[\1_{\{|\T| < \infty\}} \, 1/|\T| \big]
            = \lim_{N\to\infty} \sigma_\lambda^{(N)}(\{0\}) 
        \end{equation}
        describes the limiting mean number density of connected components in finite-mean-degree \ER{} graphs because each connected 
        component of $\G^{(N)}$ contributes exactly one eigenvalue $0$ to the spectrum of the Laplacian. 
        The second equality in \eqref{eq:conn-comp}, which is not covered by the definition weak 
        convergence of measures, follows from \cite[Cor.\ 2.21]{Hofstad24}. 
        A detailed mathematical analysis of the set of atoms of $\sigma_{A,\lambda}$---the corresponding measure for the adjacency operator---is carried 
        out in \cite{Salez20}, see Cor.~3 there and subsequent comments. 
        The strength of the atom $\sigma_{A,\lambda}(\{0\})$ was already studied in \cite{BoLeSa11}. 
    \item 
        The measure $\sigma_{A,\lambda}$ has a non-trivial continuous part if and only if $\lambda >1$ 
        \cite[Cor.\ 1.7]{BoSeVi17}. Its mass around $0$ as determined by 
        $g_\lambda(E)\coloneqq\sigma_{A,\lambda}([-E,E] \setminus \{0\})$ is of order $o(E^2)$ as 
        $E\downarrow 0$ for $\lambda <\e$ and obeys $\limsup_{E\downarrow 0} g_\lambda(E) /E >0$
        for $\lambda >\e$ \cite{CosteSalez21}. This may be taken as an indicator for an absolutely 
        continuous component of the density-of-states measure (and even delocalisation) above the critical 
        mean degree $\lambda=\e$. Spectral delocalisation in its proper sense, that is, the 
        $\P_{\mathrm{Poi}(\lambda)}$-a.s.\ existence of an absolutely continuous component of 
        $\langle\delta_o, \1_\bullet(A_\T)\delta_o\rangle$ for sufficiently large $\lambda$,
        is shown in \cite{ArBo24}.  
    \item        
        Finally, we comment on stronger notions of convergence for empirical eigenvalue distributions in the local weak limit. In \cite{BoLeSa11} the (non-averaged) empirical eigenvalue distributions
        $N^{-1}\mathrm{tr}_{\mathbb C^{N}} \big\{ \1_{\bullet}(A^{(N)}) \big\}$ are shown to converge weakly to $\sigma_{A,\lambda}$ \emph{in probability}. Inspecting the proofs of Cor.\ 12 and Prop.\ 11 there, 
        this result is seen to hold for the Laplacian as well. It is not obvious to us, however, whether the following two results transfer from the adjacency operator to the Laplacian: Weak convergence of 
        $N^{-1}\mathrm{tr}_{\mathbb C^{N}} \big\{ \1_{\bullet}(A^{(N)}) \big\}$ to 
        $\langle\delta_o, \1_\bullet(A_\T)\delta_o\rangle$ almost surely \cite{BoLeSa11, CampbellORourke24} and 
        uniform convergence of the distribution function of $\sigma_{A,\lambda}^{(N)}$ to 
        that of $\sigma_{A,\lambda}$ as $N\to\infty$ \cite[Thm.\ 2.5]{Bor17}.
    \end{longlist}
    \label{rem:lif-thm}
\end{rems}

\subsection{Organisation of the paper}
We establish our notation and some useful properties of 
BGWTs conditioned on survival in Section~\ref{sec:bgwtsurvival}. In Section~\ref{sec:isolatedcores} we 
identify ``bad'' regions in trees that hamper the analysis of the return probability and collect properties and tools to handle them.
The proof of Theorem~\ref{thm:returnprobability} will be given in Section~\ref{sec:proof}, where we obtain a 
suitable bound for the annealed probability that such a bad region is hit by the random walk. When 
combined with isoperimetric results for walks that do not hit any of these bad regions, the theorem 
follows. The proof of Theorem~\ref{thm:lifshits} is given in Section~\ref{sec:lifshits}. Here, the 
main point is to transfer the result of Theorem~\ref{thm:returnprobability}, which relates to the 
normalised graph Laplacian, to an upper bound for the (standard) graph Laplacian. 

\section{Properties of BGWTs conditioned on survival}\label{sec:bgwtsurvival}

In this section we are going to collect a few technical properties of BGWTs conditioned on survival and introduce necessary technical notation needed in the proof of Theorem \ref{thm:returnprobability}.
We begin by stating that the event for observing a finite but large tree in the supercritical regime of the BGWT is exponentially rare, 
that is, there exists a constant $c_{\mathrm{fin}} >0$ such that 
\begin{equation}
	\mathbb P_\mu^*\big(|\mathbf T|\geq M \mid |\mathbf T|<\infty\big)\leq \exp (-c_{\mathrm{fin}} M),
	\label{eq:finitetreetails}
\end{equation}
for every $M \in\mathbb{N}$, see \cite[Theorem I.13.1]{Harris63} or \cite[Lemma 2]{Piau98}. 

In view of the tree Markov property, it is helpful to utilize a BGWT conditioned on survival as obtained from a branching process with independent offspring generation. However, this requires two different kinds of individuals: 
We assign to each vertex $x\in \T$ a
type $f \in \{S, E\}$ depending on whether $x$ has infinitely many descendants in $\T$ (``\emph{survivor}'', type $S$) or whether it has only finitely many descendants in $\T$ (``\emph{going extinct}'', type $E$), see e.g.\ \cite[Chap.\ I.D.12]{AthreyaNey04} and 
\cite[Sect.\ 5.7]{LyonsPeres16}. In order to generate the BGWT-ensemble under the conditional measure 
$\P_{\mu}$ one starts with the root which is always assigned type $S$. A vertex of type $S$ must have at least one child of type $S$. Its total number $n\in\N$ of enumerated children, among which the ones in the set $\emptyset\neq U\subseteq\{1\dots,n\}$ are of type $S$ and the remaining ones of type $E$, follows the distribution 
\begin{equation}
    \label{eq:S-multitype_offspring_distributions}
		\mu_S(n,U)\coloneqq\mu(n)(1-\Lambda)^{|U|-1}\Lambda^{n-|U|}.  
\end{equation}
On the other hand, the probability of a vertex of type $E$ to have $n\in\N_{0}$ children, which then are necessarily all of type $E$ themselves, is given by
\begin{equation}
    \label{eq:E-multitype_offspring_distributions}
        \mu_E(n)\coloneqq\mu(n)\Lambda^{n-1}.
\end{equation}
We recall the relation $\sum_{n\in\mathbb{N}_0} \mu(n) \Lambda^n = \Lambda$ 
for the extinction probability $\Lambda$ \cite[Thm.\ 3.1]{Hofstad17},
which guarantees the normalisation of the distributions $\mu_{S}$ and $\mu_{E}$. We also recall that the case 
$\Lambda=0$ is not relevant in the above considerations because then conditioning is not needed.

Firstly, we turn to the tree Markov property for \emph{unconditional} BGWT and fix an Ulam--Harris label  $x\in\mathcal X$. Let $F_{x}^{+}$, resp.\ $F_{x}^{-}$, be an event that $x\in\T$ plus further requirements concerning only, resp.\ not, the descendants of $x$ in $\T$. Then the construction of the (single-type) BGWT-ensemble under $\P_{\mu}^{*}$ as a branching process implies the Markov property
\begin{equation}
\label{eq:Markov-star}
\begin{split}
	\P^{*}_{\mu}(F_{x}^{-} \cap F_{x}^{+} \mid x\in\T) 
	&= \P^{*}_{\mu}(F_{x}^{-} \mid x\in\T) \;\P^{*}_{\mu}(F_{x}^{+}  \mid x\in\T)  \\
	&= \P^{*}_{\mu}(F_{x}^{-} \mid x\in\T) \; \P^{*}_{\mu}(F_{o}^{+}),
\end{split}
\end{equation}
where $F_{o}^{+}$ is the event obtained from $F_{x}^{+}$ by shifting it back to the root.
Essentially, conditionally on the existence of a given vertex in the tree, its progeny and ancestry are independent, and its progeny is distributed just as the tree at the root.

An analogous tree Markov property holds for BGWT conditioned on survival: Additionally to the vertex $x\in\mathcal X$, we also need to 
 fix its type $f\in \{S,E\}$. We introduce the notation  $x \in\T_{f}$ as a shorthand for $x\in \T$ and that the type of $x$ in $\T$ is $f$. In analogy to the tree Markov property \eqref{eq:Markov-star} with respect to $\P_\mu^*$, the above construction of the BGWT-ensemble under $\P_{\mu}$ as a two-type-branching-process implies the tree Markov property 
\begin{equation}
	\label{eq:Markov}
	\P_{\mu}(F_{x}^{-} \cap F_{x}^{+} \mid x\in\T_{f}) 
	= \P_{\mu}(F_{x}^{-} \mid x\in\T_{f}) \;\P_{\mu}(F_{x}^{+}  \mid x\in\T_{f})  
\end{equation}
for events $F_{x}^{\pm}$ as in \eqref{eq:Markov-star}, i.e. the progeny and ancestry of a given vertex conditionally on its type are independent. Depending on the type, the rightmost conditional probability in \eqref{eq:Markov} can be reduced as follows
\begin{align}
 \P_{\mu}(F_{x}^{+} \mid x\in\T_{S}) &= \P_{\mu}(F_{o}^{+}), \label{eq:shift-S}\\
 \P_{\mu}(F_{x}^{+} \mid x\in\T_{E}) &= \P_{\mu_{E}}^{*}(F_{o}^{+}) 
 		= \P_{\mu}^{*}(F_{o}^{+} \mid |\T| < \infty). \label{eq:shift-E}
\end{align}

\section{Isoperimetric ratios and consequences}\label{sec:isolatedcores}
        
This is the second section of a preparatory nature to set the stage for the proof of Theorem~\ref{thm:returnprobability} in Section~\ref{sec:proof}. 
        
It is straightforward -- and we will do so in Lemma \ref{lem:complement} below -- to analyse the return probability of a random walk on a graph on which the operator norm of its Markov kernel is bounded away from one. Such a bound can be achieved if the graph has a positive edge-isoperimetric constant, see e.g.\ \cite[Sect.\ 6.1]{LyonsPeres16} or \cite[Sect.~3]{MullerStern24}. 
In turn, this is the case if the graph has no arbitrarily bad
\emph{isolated cores}, that is, subgraphs with arbitrarily small edge boundary as compared to their volume. 

\subsection{Isolated cores}

We follow Vir\'ag \cite[Sect.~3]{Virag00} for the notions introduced in this section but keep the conventions of \cite{MullerStern24} in that we use unweighted volumes $|\pmb\cdot|$. 

\begin{defn}
    Let $q>0$ and a let $\G$ be a locally finite graph with a countable vertex set.
    \begin{longlist}
	\item     
        The $q$-\emph{isolation} 
        of a (possibly empty) finite vertex subset $V \subseteq \mathbf G$ is defined as  
	    \begin{equation}
	       \iota_q(V)\coloneqq q|V|-|\partial V|, 
        \end{equation}
	    where $\partial V\coloneqq \partial_\G V$ denotes the \emph{edge boundary} of $V$ in $\G$. 
	\item 	
        A (possibly empty) finite vertex subset $V$ is called a $q$-\emph{isolated core} if 
	    \begin{equation}
	        \iota_q(W)< \iota_q(V) \qquad
	        \text{for all proper subsets $W\varsubsetneqq V$}.
	    \end{equation}
	\item 	
        The union of all $q$-isolated cores in $\mathbf G$ is denoted by $\mathcal A_q(\mathbf G)$, its connected components are called \emph{$q$-islands}. The complementary vertex set $\mathbf G\setminus \mathcal A_q(\mathbf G)$ is called the \emph{$q$-oceans} of $\mathbf G$.
    \end{longlist}
\end{defn}

\begin{rem}
		\label{delta_q-pos}
		Since $\iota_q(\varnothing)=0$, we have $\iota_q(V)>0$ for every non-empty 
		$q$-isolated core $V$,
        that is, an upper bound for the \emph{isoperimetric ratio}
        \begin{equation}
            \frac{|\partial V|}{|V|} < q
        \end{equation}
        of $V$.
\end{rem}
A central issue in Section~\ref{sec:proof} is to estimate the probability that the random walk visits large $q$-isolated cores. 
		
\subsection{Skipping islands}
Another useful tool in our analysis will be the random walk obtained from the original simple random 
walk on the BGWT by suitably jumping over $q$-islands. Sometimes referred to as an \emph{induced random walk}, its mathematical formulation requires 
weighted graphs $(\mathbf G, w)$, where $w\colon\mathbf G\times\mathbf G\to[0,\infty[\,$ 
is a symmetric edge-weight function.

\begin{defn}
    Let $(\mathbf G,w)$ be a weighted graph with a countable vertex set.
	\begin{longlist}
	\item 
        For $x\in \mathbf G$ we introduce the \emph{vertex weights} 
		\begin{equation}
		    w(x)\coloneqq\sum\limits_{y\in\mathbf G} w(x,y),
		\end{equation}	
        which we assume to be strictly positive and finite. 
	\item 
        We denote by $\langle \cdot,\cdot \rangle_{\mathbf G,w}$ the \emph{$w$-weighted inner product} 
        on the weighted Hilbert space $\ell^2(\mathbf G,w)$ given by 
		\begin{equation}
		    \langle\phi,\psi\rangle_{\mathbf G,w} 
            \coloneqq \sum\limits_{x\in\mathbf G} w(x)\overline{\phi(x)}\psi(x)
		\end{equation}	
        for every $\phi,\psi \in \ell^2(\mathbf G, w)$. 
	\item 
        The \emph{Markov kernel} $\mathbf{P}_{\mathbf G,w}$ acts as a self-adjoint operator on 
        $\ell^2(\mathbf G,w)$ by 
        \begin{equation}
            (\mathbf P_{\mathbf G,w}\phi)(x) 
            \coloneqq\sum\limits_{y\in\mathbf G}\frac{w(x,y)}{w(x)}\phi(y)
        \end{equation}
		for every $\phi\in\ell^2(\mathbf G,w)$ and every $x\in \mathbf G$. 
        It describes a canonical discrete-time random walk $(W_t)_{t\in\mathbb{N}_0}$ on 
        $(\mathbf G, w)$
        whose transition probability from $x$ to $y$ is given by the normalised edge weight 
        $w(x,y)/w(x)$. If $W_0=x$ with probability one, we denote the probability measure of 
        $(W_t)_{t\in\mathbb{N}_0}$ by $P_x^{\mathbf G,w}$.
	\end{longlist}
\end{defn}


In the remainder of this section we will fix $q\in {}]0,1]$ and consider an infinite tree graph 
$\mathbf T$, which is locally finite and has a countable vertex set. 
We will define the random walk induced by the $q$-oceans 
\begin{equation}
    \mathbf T_q \coloneqq \mathbf T\setminus \mathcal A_q(\mathbf T)
\end{equation}
of $\mathbf T$. It jumps over the $q$-islands of $\mathbf T$ in such a way that its probability for reaching $y \in \mathbf T_q$ from  $x \in \mathbf T_q$ without specifying at which time 
is the same as for the simple random walk on $\mathbf T$.

\begin{defn}
    \label{def:Wt}
    \begin{longlist}
    \item  
        We write 
        \begin{equation}
            \label{eq:tau-oc-def}
            \tau_{\mathrm{oc}} \coloneqq \inf\big\{t\in\N:X_t\in \mathbf T_q\big\}\in\N\cup\{\infty\}
        \end{equation}
        for the first hitting time \emph{after zero} of a $q$-ocean by the 
        simple random walk $(X_t)_{t\in\N_0}$ on the tree $\mathbf T$.
    \item 
        The map $w_q \colon\mathbf T_q\times\mathbf T_q \to [0,\infty[\,$, 
        \begin{equation}
            \label{eq:wq-def}
            w_q(x,y)\coloneqq \deg_{\mathbf T}(x)\,P_x^\mathbf T(X_{\tau_{\mathrm{oc}}}=y),
            \qquad x,y\in\mathbf T_q,
        \end{equation}       
        is an edge-weight function, see also Remark~\ref{rem:wq-prop}. 
    \item 
        We write $P_x^{\mathbf T_q} \coloneqq P_x^{\mathbf T_q, w_q}$ for the probability measure of 
        the canonical random walk $(W_t)_{t\in\N_0}$ on the weighted graph $(\mathbf T_q,w_q)$ 
        starting in $x\in\mathbf T_q$. 
        We abbreviate the associated Markov operator on $\ell^2(\mathbf T_q,w_q)$ by 
        $\mathbf P_q\coloneqq \mathbf P_{\mathbf T_q,w_q}$ and note that 
        \begin{equation}
            \label{eq:prob-Markov}
            P_x^{\mathbf T_q}(W_t =y) 
            = \frac{1}{w_q(x)} \; \langle \delta_{x}, \mathbf P_q^{\mkern2mu t} 
            	\delta_{y}\rangle_{\mathbf T_q, w_q} 
        \end{equation}
    \end{longlist}
\end{defn}

\begin{rem}
    \label{rem:wq-prop}
    \begin{longlist}
    \item
    For symmetry and further properties of $w_q$ we refer to \cite[Rem.\ 3.5]{MullerStern24}. 
    In particular, we mention that for every $x,y \in \mathbf T_q$ the inequality  
    \begin{equation}
        \label{eq:wq-wsrw}
        w_q(x,y) \ge \begin{cases} 1, & \text{ if } \{x,y\} \text{ is an edge of }  
                                   \mathbf T \\ 0, & \text{ otherwise} \end{cases}
    \end{equation} 
    holds. It is strict if and only if $x$ and $y$ belong to the outer vertex boundary of 
    one and the same $q$-island of $\mathbf T$. 
    \item 
    The vertex weights of the weighted graph $(\mathbf T_q,w_q)$ satisfy 
    \begin{equation}
        w_q(x) = \deg_{\mathbf T}(x)
    \end{equation}
    for every $x\in \mathbf T_q$ because the probabilities in \eqref{eq:wq-def} add up to one. 
    \end{longlist}
\end{rem}


The proof of the next theorem follows from combining \cite[Lemma 3.6]{MullerStern24} with 
\cite[Thm.\ 6.7]{LyonsPeres16}.
		
\begin{thm}        
    \label{thm:markovkernelbound}
	Let $q \in {}]0,1]$ and let $\mathbf T$ be a locally finite rooted tree with a countably \emph{infinite} vertex set. Then, the Markov kernel $\mathbf P_q$ of the canonical random walk 
    $(W_t)_{t\in\mathbb{N}_0}$ on the weighted graph $(\mathbf T_q, w_q)$ satisfies
	\begin{equation}
	    \|\mathbf P_q\|_{\mathbf T_q,w_q}\leq 1-\frac{q^2}{18},
	\end{equation}	
	where $\| \cdot \|_{\mathbf T_q,w_q}$ denotes the operator norm on the Banach space of 
    bounded linear operators on $\ell^2(\mathbf T_q, w_q)$.
\end{thm}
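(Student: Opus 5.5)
The bound will follow from a Cheeger-type inequality for the weighted graph $(\mathbf T_q,w_q)$. Following \cite[Thm.\ 6.7]{LyonsPeres16}, if the edge-isoperimetric constant
\[
\Phi \coloneqq \inf\Big\{ \frac{w_q(\partial_{\mathbf T_q} S)}{w_q(S)} : S\subseteq \mathbf T_q \text{ finite, non-empty}\Big\}
\]
satisfies $\Phi>0$, then $\|\mathbf P_q\|_{\mathbf T_q,w_q}\le \sqrt{1-\Phi^2}\le 1-\Phi^2/2$. Here $w_q(\partial S)=\sum_{x\in S,\,y\notin S} w_q(x,y)$ and $w_q(S)=\sum_{x\in S}w_q(x)=\sum_{x\in S}\deg_{\mathbf T}(x)$ by Remark~\ref{rem:wq-prop}. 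So the task reduces to showing $\Phi\ge q/3$, since then $\|\mathbf P_q\|\le 1-q^2/18$. This lower bound is the content of \cite[Lemma 3.6]{MullerStern24}, which I would reprove along the following lines.

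Fix a finite non-empty $S\subseteq\mathbf T_q$. First I lift $S$ to a vertex set in $\mathbf T$. Let $\widehat S\supseteq S$ be $S$ together with all $q$-islands whose outer vertex boundary lies entirely in $S$, or more generally islands enclosed by $S$. I would then compare the two boundaries. Every edge of $\partial_{\mathbf T}\widehat S$ is either an ordinary tree edge between $S$ and $\mathbf T_q\setminus S$, which by \eqref{eq:wq-wsrw} contributes weight at least $1$, or an edge into an island that is not absorbed. In the latter case, escaping through that island from $S$ to a vertex outside $S$ has positive $w_q$-weight, and I would bound it below by $1/\deg$-type factors via the harmonic-measure interpretation of $w_q$. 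Morally this gives $w_q(\partial S)\gtrsim|\partial_{\mathbf T}\widehat S|$ up to handling islands touching both $S$ and its complement.

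Next I use that $\widehat S$ contains no non-trivial portion that is a $q$-isolated core beyond the absorbed islands. Take a maximal-isolation subset $V\subseteq\widehat S$, i.e.\ one maximising $\iota_q$; its minimal maximiser is a $q$-isolated core, so $V\subseteq\mathcal A_q$. Hence
\[
q|\widehat S|-|\partial\widehat S| \le q|V|-|\partial V|.
\]
Replacing $\widehat S$ by $\widehat S\setminus V$ and using submodularity of the edge boundary gives $|\partial(\widehat S\setminus V)|\ge q|\widehat S\setminus V|\ge q|S|$.

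Finally, on a tree, $\sum_{x\in S}\deg(x)\le 2|E(S)|+|\partial S|\le 3\max(|S|,|\partial S|)$, because the induced subgraph is a forest. This converts the vertex count into the weighted volume, losing the factor $3$, and yields $\Phi\ge q/3$. Infiniteness of $\mathbf T$ is used so that $\mathbf T_q$ is non-empty and every finite set has a boundary.

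The main obstacle is the boundary comparison across islands that touch both $S$ and $\mathbf T_q\setminus S$. Here $w_q(x,y)$ for $x\in S$ and $y\notin S$ must be shown to dominate the contribution of tree edges into that island. My first attempt would be to absorb each such island into $S$ or into its complement, whichever is cheaper, and to use Rayleigh monotonicity (effective conductance through the island is at least that of a single path) to control the resulting weight.
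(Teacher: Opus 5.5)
Your top-level reduction is exactly the paper's. The paper obtains the theorem by combining the Cheeger bound $\|\mathbf P_q\|\le\sqrt{1-\Phi^2}\le 1-\Phi^2/2$ from \cite[Thm.~6.7]{LyonsPeres16} with the bound $\Phi\ge q/3$ from \cite[Lemma~3.6]{MullerStern24}, whence $18=2\cdot 3^2$. Simply citing that lemma therefore finishes the proof.

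Your sketched reproof of $\Phi\ge q/3$, however, has a genuine gap.
\begin{itemize}
\item The comparison $w_q(\partial S)\gtrsim|\partial_{\mathbf T}\widehat S|$ fails island by island. Suppose an island is a path of $n$ vertices joining $x\in S$ to $y\in\mathbf T_q\setminus S$. It contributes only $w_q(x,y)=1/(n+1)$ to $w_q(\partial S)$, but a full tree edge to $\partial_{\mathbf T}\widehat S$, whether or not you absorb it. Rayleigh monotonicity just reproduces the same $1/(n+1)$.
\item The inequality $|\partial_{\mathbf T}(\widehat S\setminus V)|\ge q|S|$ is correct, by supermodularity of $\iota_q$. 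But it counts edges from $S$ into the removed cores, e.g.\ into enclosed dead-end islands. Such edges carry no $w_q$-boundary weight at all, because the walk returns to $S$.
\item Your volume bound $\sum_{x\in S}\deg_{\mathbf T}(x)\le 2|E(S)|+|\partial_{\mathbf T}S|$ again involves these island edges. You never control them by $w_q(\partial S)$.
\end{itemize}

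The missing idea is that the through-island weights are not needed at all. Let $J$ be the union of all islands adjacent to $S$, and set $V\coloneqq S\cup J$. Here $J$ is finite when islands are finite, as in Lemma~\ref{lem:ms24_cor29}; otherwise replace $J$ by a finite union of cores containing all island neighbours of $S$, which is again a core.

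A minimal maximiser of $\iota_q$ among subsets of $V$ is a $q$-isolated core. Hence it is contained in $V\cap\mathcal A_q=J$. Moreover, $\iota_q(W)\le\iota_q(J)$ for all $W\subseteq J$, since $J$ is a disjoint union of pairwise non-adjacent cores. Therefore $\iota_q(V)\le\iota_q(J)$.

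Write $E_{\mathbf T}(\cdot,\cdot)$ for the set of tree edges between two vertex sets. Writing out both boundaries, the edges from $J$ to $\mathbf T_q\setminus S$ cancel, and you obtain
\[
a\coloneqq|E_{\mathbf T}(S,\mathbf T_q\setminus S)|\ \ge\ q|S|+|E_{\mathbf T}(S,\mathcal A_q)|.
\]
By \eqref{eq:wq-wsrw}, each such ocean--ocean edge has weight at least $1$, so $w_q(\partial S)\ge a$. On the other hand, since $S$ spans a forest and $|S|\le a/q$, $q\le 1$,
\[
\sum_{x\in S}\deg_{\mathbf T}(x)\le 2|S|+a+|E_{\mathbf T}(S,\mathcal A_q)|\le(2-q)|S|+2a\le(2/q+1)\,a\le 3a/q .
\]
This gives $\Phi\ge q/3$.
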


\subsection{Useful consequences of anchored expansion}

It was proved in \cite[Cor.\ 1.3]{ChenPeres}, see also \cite[Thm.\ 6.52]{LyonsPeres16}, that 
$\mathbb{P}_\mu$-a.e.\ tree $\mathbf T \in\mathbb T$ possesses a positive anchored expansion constant 
\begin{equation}
	\label{T-exp-const}
	\mathbf{i}(\mathbf{T})  \coloneqq  \lim_{n\to\infty}\inf\Bigl\{\frac{|{\partial K}|}{|K|}:
    \;o\in K\subset\mathbf{T}\mbox{ connected}, n\leq |{K}|<\infty\Bigr\} >0.
\end{equation}
This was strenghtened in \cite[Lemma 2.3]{MullerStern24} where it is shown that even
\begin{equation}
    \mathbf{i}_{\mkern3mu\mathbb T} \coloneqq \mathbb P_\mu\text{-}\mkern2mu 
    \underset{\mathbf T \in\mathbb T}{\essinf} \mkern5mu \mathbf{i}(\mathbf T) >0.
\end{equation}
We conclude this preparatory section with two consequences of the anchored-expansion property of supercritical BGWTs.

\begin{lem}
    \label{lem:ms24_cor29}
    Let $q \in {}]0,\mathbf{i}_{\mkern3mu\mathbb T}[\,$. Then, for $\mathbb P_\mu$-almost every 
    $\mathbf T \in\mathbb T$, every $q$-island of $\mathbf T$ has only finitely many vertices and 
    thus is itself a $q$-isolated core of $\mathbf T$.    
\end{lem}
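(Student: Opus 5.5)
We restrict to the full-measure set of trees $\mathbf T$ with $\mathbf i(\mathbf T)\ge \mathbf i_{\mkern3mu\mathbb T}>q$; this set has full $\mathbb P_\mu$-measure by \cite[Lemma 2.3]{MullerStern24}. Fix such a tree. The plan has two steps. First we show that no $q$-island is infinite. Then we show that a finite $q$-island is itself a $q$-isolated core.

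\emph{Step 1 (finite cores are far from the root).} Since $\mathbf i(\mathbf T)>q$, the definition \eqref{T-exp-const} yields some $n_0=n_0(\mathbf T)$ with the following property: every connected $K\ni o$ with $n_0\le |K|<\infty$ satisfies $|\partial K|\ge q'|K|$, where $q'$ is fixed with $q<q'<\mathbf i(\mathbf T)$.

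Let $V$ be a non-empty $q$-isolated core. We first claim that $V$ is connected. If $V=V_1\,\dot\cup\, V_2$ with no edges between $V_1$ and $V_2$, then $\partial V=\partial V_1\,\dot\cup\,\partial V_2$, so $\iota_q(V)=\iota_q(V_1)+\iota_q(V_2)$. Core minimality forces $\iota_q(V_i)<\iota_q(V)$ for each $i$, hence $\iota_q(V_1)>0$ and $\iota_q(V_2)>0$. Then $V_1$ has a proper subset that is again a core, and an induction on $|V|$ closes the argument. Alternatively, one simply works with the connected components of $V$.

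The key fact we need is that in a tree, for a connected $V$ and any connected $K\supseteq V$, the isolation is sub-/super-additive under taking unions along paths. Concretely, let $K$ be $V$ joined with the geodesic from $o$ to $V$. Attaching a path of length $\ell$ to $V$ changes the volume by $\ell$ and the boundary by at most $\ell\cdot(\text{degrees along the path})$. This would be used to show that all cores lie within bounded distance of finitely many ``bad'' regions near the root.

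The cleaner route is the following. A $q$-island $I$ is a connected union of cores. Take any finite connected subset $K\subseteq I$ that is a union of finitely many cores together with the connecting vertices. Cores $V, V'$ satisfy the union inequality $\iota_q(V\cup V')\ge \iota_q(V)+\iota_q(V')-\iota_q(V\cap V')$, by submodularity of $|\partial\cdot|$. Combined with core minimality, which gives $\iota_q(V\cap V')<\iota_q(V)$ unless $V\subseteq V'$, this shows that a union of overlapping cores has $\iota_q>0$. It is in fact again a core; this is Vir\'ag's lemma that the union of two intersecting cores is a core.

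\emph{Step 2 (islands are finite and are cores).} Suppose some $q$-island $I$ were infinite. Using Vir\'ag's union lemma repeatedly, together with the fact that cores sharing an edge of $\mathbf T$ can be merged at the cost of adding boundary-neighbour vertices, we can exhaust $I$ by an increasing sequence of finite connected cores $K_j$ with $|K_j|\to\infty$ and $|\partial K_j|<q|K_j|$. Adjoining the geodesic from $o$ to $K_1$, which has fixed length, produces connected sets $K_j'\ni o$ with $|K_j'|\to\infty$ and $|\partial K_j'|\le |\partial K_j|+C<q|K_j|+C<q'|K_j'|$ for large $j$. This contradicts Step 1. Hence every island is finite. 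A finite island is a finite connected union of cores, and by the union lemma it is itself a $q$-isolated core.

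\emph{Expected obstacle.} The main difficulty is the merging statement: that the union of two intersecting or adjacent cores is again a core, or at least has positive isolation. This is the submodularity-based argument of \cite[Sect.~3]{Virag00}, adapted to unweighted volumes as in \cite{MullerStern24}. In the simplest reading, this lemma is essentially \cite[Cor.~2.9]{MullerStern24} applied with $\mathbf i_{\mkern3mu\mathbb T}$ in place of $\mathbf i(\mathbf T)$, and I would cite it there.
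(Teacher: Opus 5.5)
The paper gives no argument of its own for this lemma. It just cites \cite[Cor.~2.9]{MullerStern24}, which is your fallback as well. The mechanism you sketch is also the right one: Vir\'ag's union lemma for cores, plus a contradiction with $\mathbf i(\mathbf T)\ge\mathbf i_{\mkern3mu\mathbb T}>q$ after joining arbitrarily large connected cores to the root by a fixed geodesic. Read as a self-contained proof, however, several steps are wrong as written.

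\emph{Cores need not be connected.} The union of \emph{any} two cores is a core, so two cores with no edge between them form a disconnected core. Your induction therefore cannot prove connectedness. What is true, and what you actually need, is the converse. If $V$ is a core and $V=V_1\dot\cup V_2$ with no edges between $V_1$ and $V_2$, then $\iota_q(W\cup V_2)=\iota_q(W)+\iota_q(V_2)$ for every $W\subseteq V_1$, so $V_1$ is again a core. In other words, connected components of cores are cores.

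\emph{The union lemma needs no intersection hypothesis.} Take $W\varsubsetneqq A\cup B$ with $A,B$ cores. Supermodularity of $\iota_q$, applied first to $(W,A)$ and then to $(W\cup A,B)$, gives $\iota_q(W)\le\iota_q(W\cup A)\le\iota_q(A\cup B)$. The first inequality is strict if $A\not\subseteq W$, and the second is strict if $A\subseteq W$. This is what lets you merge adjacent but disjoint cores inside an island. ``Adding boundary-neighbour vertices'' is not needed, and it is not legitimate: adding vertices in general destroys the core property.

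\emph{How Step 2 goes through with these two facts.} Fix $v\in I$ and a finite connected $C\subseteq I$ containing $v$; such $C$ of any size exist because $I$ is infinite, connected and locally finite. Cover $C$ by finitely many cores, take their union (a core), and take its component containing $C$. This is a connected core $K\ni v$ with $|K|\ge|C|$ and $|\partial K|<q|K|$. Your geodesic estimate then contradicts $\mathbf i(\mathbf T)>q$.

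\emph{The last step.} A finite island $I$ is not literally ``a finite connected union of cores'', because the cores covering it may have components elsewhere. The fix: let $U\supseteq I$ be the union of finitely many covering cores. Then $I\subseteq U\subseteq\mathcal A_q$, and by maximality $I$ is a connected component of $U$, hence a core by the first fact.

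\emph{Drop the bounded-distance paragraph.} The claim that cores lie within bounded distance of finitely many regions near the root is false in general. Leaves and long linear pieces produce cores arbitrarily far from the root. It also plays no role in the argument.
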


We refer to \cite[Cor.\ 2.9]{MullerStern24} for the proof.

\begin{lem}
    \label{lem:ms24_lem25}
	There exist constants $q \in {}]0,\mathbf{i}_{\mkern3mu\mathbb T}[\,$ and $\cAE>0$ such that for every $s\in \N$ the \emph{anchored-expansion event} 
	\begin{equation}
	    \AEevent_{s} \coloneqq \left\{\mathbf T\in\mathbb T: \exists\, o\in K\subset \mathbf T
            \text{ connected such that~ } s\leq |K|<\infty,\,\frac{|\partial K|}{|K|}\leq q\right\}
	\end{equation}
    has exponentially small probability
	\begin{equation}
	    \mathbb P_\mu (\AEevent_{s}) \leq \exp(-\cAE s).
	\end{equation}	
\end{lem}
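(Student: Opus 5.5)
The plan is a first-moment (union) bound over all candidate sets $K$, computed under the unconditioned measure $\P^*_\mu$. We then pass to $\P_\mu$ via
\[
\P_\mu(\AEevent_s)\le \P^*_\mu(\AEevent_s)/(1-\Lambda),
\]
which costs only a constant; that constant is absorbed by shrinking $\cAE$ (for small $s$ the claim follows by shrinking $\cAE$ further).

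\emph{Reduction to offspring sums.} A connected set $o\in K\subset\T$ is a rooted subtree. Since $\T$ is a tree, each boundary edge of $K$ joins a vertex $v\in K$ to one of its children outside $K$, except possibly the edge from the root upward, which does not exist. Write $\xi_v$ for the number of children of $v$ in $\T$. Every vertex of $K\setminus\{o\}$ is a child of exactly one vertex of $K$, so
\begin{equation*}
  \sum_{v\in K}\xi_v=(|K|-1)+|\partial K| .
\end{equation*}
Hence, with $n\coloneqq|K|$, the condition $|\partial K|\le qn$ is equivalent to $\sum_{v\in K}\xi_v\le (1+q)n-1$.

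\emph{Counting.} Fix $n$ and encode $K$ through a breadth-first exploration. List the vertices $v_1=o,v_2,\dots,v_n$ of $K$ in breadth-first order. Record their offspring numbers $k_1,\dots,k_n$, and among the $m=\sum_i k_i$ children thus revealed (ordered by exploration) record which $n-1$ belong to $K$. This data determines the Ulam--Harris labels of $K$. By the tree Markov property \eqref{eq:Markov-star}, the probability that a prescribed labelled $K$ occurs in $\T$ with these offspring numbers is $\prod_i\mu(k_i)$. Summing over all admissible data gives
\begin{equation*}
 \P^*_\mu\bigl(\exists K:\ |K|=n,\ |\partial K|\le qn\bigr)
 \le \sum_{k_1+\dots+k_n\le(1+q)n}\ \prod_{i=1}^n\mu(k_i)\binom{k_1+\dots+k_n}{n-1}
 \le \binom{\lfloor(1+q)n\rfloor}{n-1}\,\P\bigl(S_n\le(1+q)n\bigr),
\end{equation*}
where $S_n$ is a sum of $n$ i.i.d.\ $\mu$-distributed variables.

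\emph{Energy versus entropy.} Two estimates compete.
\begin{itemize}
\item[(i)] \emph{Entropy.} By Stirling, the binomial coefficient is at most $\exp\bigl(n\,h(q)\bigr)$ with $h(q)\coloneqq(1+q)H\bigl(q/(1+q)\bigr)\to0$ as $q\downarrow0$, where $H$ is the binary entropy.
\item[(ii)] \emph{Energy.} Since the $\xi$'s are nonnegative, $\E[\e^{-\theta\xi}]<\infty$ for every $\theta>0$. The Chernoff lower-tail bound then gives $\P(S_n\le(1+q)n)\le \exp(-n I_q)$ with
\[
I_q=\sup_{\theta>0}\bigl(-\theta(1+q)-\ln\E[\e^{-\theta\xi}]\bigr).
\]
Because $\E\xi=\lambda>1$, the derivative in $\theta$ at $0$ is $\lambda-(1+q)>0$ for $q<\lambda-1$. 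Hence $I_q>0$, and $I_q$ stays bounded below as $q\downarrow0$ (it increases to $I_0>0$). Only the first moment is needed here.
\end{itemize}

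\emph{Conclusion.} Choose $q$ so small that $q<\mathbf i_{\mkern3mu\mathbb T}$ (possible since $\mathbf i_{\mkern3mu\mathbb T}>0$) and $h(q)<I_q/2$. Then the probability for fixed $n$ is at most $\e^{-nI_q/2}$. Summing over $n\ge s$ gives a bound $C\e^{-sI_q/2}$, which yields the claim with a suitable $\cAE$.

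I expect the main obstacle to be making the counting step rigorous. One must check that the breadth-first encoding really is injective into the stated data, and that the probability weight of each labelled configuration is exactly $\prod_i\mu(k_i)$, including at vertices where the choice of $K$-children influences the later exploration order. Once this is settled, the rest is the routine Chernoff-versus-Stirling comparison above.
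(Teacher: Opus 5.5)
Your core argument is correct, and the step you flagged as the main obstacle is harmless. Realise $\T$ through i.i.d.\ offspring numbers $(\xi_x)_{x\in\mathcal X}$ with law $\mu$. Fix a pair $(K,(k_v)_{v\in K})$, where $K\ni o$ is closed under taking parents and each child $(u,j)\in K$ of $u\in K$ has $j\le k_u$. Then the event $\{K\subseteq\T,\ \xi_v=k_v\ \forall v\in K\}$ coincides with $\{\xi_v=k_v\ \forall v\in K\}$, so it has probability $\prod_{v\in K}\mu(k_v)$. The breadth-first order is a deterministic function of the pair, so nothing is adaptive. Injectivity holds because the $i$-th marked position is $v_{i+1}$, while positions $k_1+\dots+k_{i-1}+1,\dots,k_1+\dots+k_i$ carry the children of $v_i$ in label order; hence all labels are recovered recursively.

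\textbf{The gap is in the last step.} The lemma has no prefactor, so it demands in particular $\P_\mu(\AEevent_1)<1$. ``Shrinking $\cAE$'' converts a bound $C\exp(-cs)$ into $\exp(-\cAE s)$ for all $s\in\N$ only if $\P_\mu(\AEevent_s)<1$ for every $s$, and you have not shown this.

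Your union bound cannot give it as it stands, because it passes through $\P^*_\mu$. Under $\P^*_\mu$ every finite tree lies in $\AEevent_1$ (take $K=\T$, so $|\partial K|=0$). Hence $\P^*_\mu(\AEevent_1)/(1-\Lambda)\ge\Lambda/(1-\Lambda)$, which is $\ge1$ as soon as $\Lambda\ge1/2$. Relatedly, $\binom{\lfloor(1+q)n\rfloor}{n-1}\le\exp(nh(q))$ is not uniform in $n$: for $n=2$ and small $q$ the left side is $2$ and the right side is close to $1$.

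The missing idea is to use the conditioning. On $\{|\T|=\infty\}$ every finite connected $K\ni o$ has $|\partial K|\ge1$, so $|\partial K|\le q|K|$ forces $|K|\ge1/q$. For $n\ge1/q$ and $N\coloneqq\lfloor(1+q)n\rfloor$ you have $n\le N$ and $N-n+1\le qn+1\le2qn$. Hence $\binom{N}{n-1}\le\exp\bigl((1+q)n\,H(2q)\bigr)$ uniformly in $n$ when $q\le1/4$, with $H$ the binary entropy. Choose $q$ so small that $(1+q)H(2q)\le I_q/2$ and set $c\coloneqq I_q/2$. This yields
\[
\P_\mu(\AEevent_s)\le\frac{\exp\bigl(-c\max\{s,1/q\}\bigr)}{(1-\Lambda)\bigl(1-\exp(-c)\bigr)}\le\exp(-cs/2)\,\frac{\exp\bigl(-c/(2q)\bigr)}{(1-\Lambda)\bigl(1-\exp(-c)\bigr)}
\]
for every $s\in\N$. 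The last fraction is at most $1$ for small $q$, because $c\ge I_{q_0}/2>0$ for all $q\le q_0$. So $\cAE\coloneqq c/2$ works.

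\textbf{On the route.} The paper gives no proof here; it imports the lemma from M\"uller--Stern, where it accompanies their strengthening of the Chen--Peres anchored-expansion theorem. Your argument is a self-contained alternative with several advantages:
\begin{itemize}
\item It works under $\P^*_\mu$ and uses the conditioning only through the factor $1/(1-\Lambda)$ and the bound $|\partial K|\ge1$.
\item It needs nothing beyond $\lambda>1$.
\item It treats leaves and unbounded offspring in one stroke. The isoperimetric constraint itself caps $\sum_{v\in K}\xi_v$ at $(1+q)n$, so large degrees never inflate the combinatorial factor.
\item By Borel--Cantelli it even reproves $\mathbf i_{\mkern3mu\mathbb T}\ge q>0$. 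The requirement $q<\mathbf i_{\mkern3mu\mathbb T}$ is then met by passing to a smaller $q$, since $\AEevent_s$ shrinks as $q$ decreases.
\end{itemize}
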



We refer to \cite[Lemma 2.5]{MullerStern24} for the proof of the lemma. It is obvious that if 
Lemma~\ref{lem:ms24_lem25} holds for some $q$ then it also holds for all $0<q' \le q$ with
the same constant $\cAE$. Since we fix the constant $q$ in the next section, 
we refrain from displaying the dependence of the event $\AEevent_s$ on $q$.

\section{Proof of Theorem \ref{thm:returnprobability}}\label{sec:proof}
		
%
Throughout this section, we fix a constant 
$q \in {}]0, \min(1,\mathbf i_{\mkern 2mu\mathbb T})[\,$,
which guarantees the validity of  Lemma \ref{lem:ms24_lem25}.

\begin{defn}
		We fix $\T\in\mathbb{T}$.
    \begin{longlist}
    \item
    		The first hitting time of a vertex subset $S \subset \T$ by the simple random walk on $\T$ is given by 
        \begin{equation}
        		\tau_S\coloneqq\tau_{S}^\T \coloneqq \inf\big\{j\in\N_0: X_j\in  S  \big\}
						\in\N_0\cup\{\infty\},
				\end{equation} 
				and we write $\tau_{x} := \tau_{\{x\}}$ in case of a single vertex $x\in\T$. We warn the reader about the difference to the definition of the first hitting time \emph{after zero} $\tau_{\mathrm{oc}}$ of the $q$-oceans in \eqref{eq:tau-oc-def}.
    \item 
        For $t\in\N$ we set $q_t\coloneqq qt^{-1/3}$. The bad event
        \begin{equation}
                A_t \coloneqq A_t^{\T} \coloneqq \{\tau_{\mathcal A_{q_t}}\leq t\} \subseteq \T^{\N_{0}}
        \end{equation}
        contains those random-walk trajectories on $\T$ which visit a $q_t$-island 
        before time $t$.
     \end{longlist}
\end{defn}

We show first that the bad event $A_{t}$ is sufficiently rare under the annealed measure. 		
		
\begin{lem}
    \label{lem:A_t}
	There exists constants $c'>0$ and $t_{0}\in \N$ such that 
    \begin{equation}
        \E_{\mu}\big[P_o^{\mathbf T}(A_t)\big] \le \exp\big(- c' t^{1/3}\big)
    \end{equation}
    for every $t\in\N$ with $t\ge t_{0}$.
\end{lem}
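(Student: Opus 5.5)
We need to bound the annealed probability that the walk hits a $q_t$-island before time $t$. The plan is a union bound over the (random) vertices the walk can reach, combined with the tree Markov property, which turns "the walk reaches $x$" and "$x$ sits in a $q_t$-island" into independent or controllable pieces.

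\emph{Step 1 (isolated cores are large).} Let $V$ be a non-empty $q_t$-isolated core. By Lemma~\ref{lem:ms24_cor29} it may be taken finite, and by Remark~\ref{delta_q-pos} it satisfies $|\partial V|<q_t|V|$.

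\begin{itemize}
\item If $|V|$ is small, then $|\partial V|\ge1$ (the tree is infinite) forces $|V|>1/q_t=t^{1/3}/q$.
\item Each island of size at least $1/q_t$ is such a core.
\item From the perspective of any of its vertices $x$, the connected set $V\ni x$ has boundary ratio below $q_t\le q$ and size at least $t^{1/3}/q$.
\end{itemize}

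This is close to the anchored-expansion event, except that it is anchored at $x$ rather than at the root $o$.

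\emph{Step 2 (union bound over the first island vertex hit).} On $A_t$, let $x=X_{\tau_{\mathcal A_{q_t}}}$. Before hitting $x$ the walk has travelled at most $t$ steps, so $x$ lies within graph distance $t$ of $o$. Then
\[
\E_\mu[P_o^{\T}(A_t)]\le\sum_{x}\E_\mu\bigl[P_o^{\T}(\tau_x\le t)\,\1\{x\in\mathcal A_{q_t}\}\bigr].
\]
The key flexibility is to transfer the anchored-expansion estimate to the vertex $x$. Condition on $x\in\T_f$. By the tree Markov property~\eqref{eq:Markov}, the subtree of $x$ is independent of the ancestry. A core containing $x$ with small boundary, split into its part inside the progeny of $x$ and the rest, must have a large piece with small relative boundary on one side.

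\begin{itemize}
\item For the descendant part, use Lemma~\ref{lem:ms24_lem25} via~\eqref{eq:shift-S}. For type $E$, use instead~\eqref{eq:finitetreetails}: a finite subtree of size at least $s$ has probability $e^{-c s}$.
\item The ancestral part is awkward. Instead, I would choose $x$ to be the vertex of the island closest to the root, i.e.\ the core's top vertex. Then the whole island lies in the progeny of $x$, and the event that $x$ is the top of an island of size at least $t^{1/3}/q$ depends on the descendants of $x$, except for the single edge to the parent, which changes the boundary by one and is harmless.
\item Replace the hitting of the first island vertex by the hitting of the top vertex $x$ of the island. On a tree, the walk entering the island must pass through $x$ or through some boundary vertex below; I would instead take $x$ to be the top of the island whose vertex is first hit and note that the walk enters it from outside. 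If the entry is from below, the relevant $x$ changes, so I would simply sum over all vertices $y$ of the island hit.
\end{itemize}

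This yields a bound of the form
\[
\sum_x P(\text{reach }x\text{ by time }t)\cdot e^{-c\,t^{1/3}}.
\]

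\emph{Step 3 (counting, and the main obstacle).} The expected number of distinct vertices visited by time $t$ is at most $t+1$. Hence
\[
\sum_x \E_\mu[P(\tau_x\le t)]\le t+1,
\]
and the total is at most $(t+1)e^{-c t^{1/3}}\le e^{-c' t^{1/3}}$ for $t\ge t_0$.

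The obstacle is decoupling: $P_o^\T(\tau_x\le t)$ depends on the subtree of $x$ as well. Reaching $x$, however, depends only on the tree outside the progeny of $x$ up to the first hit. The walk's path until $\tau_x$ lives in the complement of the strict descendants of $x$, so $P_o^\T(\tau_x\le t)$ is measurable with respect to $F_x^-$.

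\begin{itemize}
\item The island event for the top vertex is $F_x^+$-measurable (up to the parent edge).
\item This requires restricting to the top vertex, which is a deterministic function of the island. Then the island vertex hit first by the walk, if not the top, forces the walk to have entered from below. But the walk started at $o$, above $x$, so it must pass through the top's parent edge into $x$ first.
\item Hence the first island vertex hit is always its top vertex, and the factorisation via~\eqref{eq:Markov} closes the argument.
\end{itemize}

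Summing over the two types $f$ with the two tail bounds completes the proof.
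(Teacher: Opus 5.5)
Your argument is correct and is essentially the paper's proof. The paper likewise uses that the first island vertex hit is the island's top vertex $x_S$, and that $P_o^{\T}(\tau_{x_S}=\cdot\,)$ only sees the tree outside $\prog_\T(x_S)$, while the existence of a large $S\subseteq\prog_\T(x_S)$ with small boundary ratio (parent edge discounted) is a descendant event; it then factorises via the typed tree Markov property and bounds the two types by Lemma~\ref{lem:ms24_lem25} and \eqref{eq:finitetreetails}. The only difference is bookkeeping: the paper sums over the hitting time $s\le t$ and conditions on position and type at time $s$, whereas you sum $\E_\mu[P_o^\T(\tau_x\le t)]$ over vertices and use that the expected range is at most $t+1$, which is the same sum rearranged. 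Do state the descendant event as this existence event (as in the paper's $F_y^+$), since ``$x$ is the top of an island'' is not itself measurable with respect to the progeny of $x$.
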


\begin{proof}
\emph{Setup.}
The argument crucially relies on the joint consideration of the random tree and the random walk as inherent under the annealed measure.
Let $\Omega \coloneqq \bigcup_{\T\in\mathbb T} ( \{\T\} \times \T^{\N_0} )$ be the bundle consisting of the spaces of random-walk trajectories as fibre spaces over the corresponding BGW-trees as base points. 
The space $\Omega$ carries a natural trace $\sigma$-algebra, as we briefly indicate: The label space $\mathcal X$ comes with the discrete (power-set) $\sigma$-algebra $\mathcal P(\mathcal X)$ and the product space ${\mathcal X}^{\N_0}$ with the usual product $\sigma$-algebra $\bigotimes_{\N_0} \mathcal P(\mathcal X)$ thereof, generated by cylinder sets. Restricting  the product $\sigma$-algebra 
$\mathcal F \otimes \big(\bigotimes_{\N_0} \mathcal P(\mathcal X)\big)$ on $\mathbb T \times {\mathcal X}^{\N_0}$ to $\Omega$ defines the canonical $\sigma$-algebra of measurable sets in $\Omega$. 
If $\Xi_\Omega \subseteq \Omega$ is measurable then 
\begin{enumerate}
    \item[(i)] $\Xi_\Omega \cap (\{\T\} \times \T^{\N_0}) \eqqcolon \{\T\} \times \Xi^\T$ 
defines a measurable $\T$-fibre $\Xi^\T \subseteq \T^{\N_0}$ 
for every $\T\ \in\mathbb T$ and
\item[(ii)] the function 
$\mathbb T \ni \T \mapsto P_o^\T(\Xi^\T)$ is $\mathcal F$-measurable.
\end{enumerate}

Conversely, given a family of events $\{\Xi^\T\}_{\T\in\mathbb T}$ defined on the fibres $\Xi^\T \subseteq \T^{\N_0}$, then we lift it to $\Omega$   
according to
\begin{equation}
    \label{eq:events-Omega}
	\Xi_{\Omega} \coloneqq \big(\{\Xi^\T \}_{\T\in\mathbb T}\big)_\Omega 
    \coloneqq \bigcup_{\T\in\mathbb T} \big( \{\T\} \times \Xi^\T \big).
\end{equation}
(Of course, further joint measurability assumptions of $\{\Xi^\T\}_{\T\in\mathbb T}$ are required to guarantee that $\Xi_\Omega$ is an event in $\Omega$. We note that this will be the case in all applications of \eqref{eq:events-Omega} below.)
We define the annealed probability measure of the simple random walk for a measurable subset $\Xi_\Omega \subseteq \Omega$
by $Q_\mu (\Xi_\Omega)\coloneqq \E_\mu[P_o^\T(\Xi^\T)]$. The expectation with respect to $Q_\mu$ is denoted by $E_{Q_\mu}$. A pure BGW-event $F\in\mathcal F$ also has a canonical lift to $\Omega$ in the spirit of \eqref{eq:events-Omega} by setting 
\begin{equation}
    \label{eq:Fevents-Omega}
    F_\Omega \coloneqq \big(\{F^\T \}_{\T\in\mathbb T}\big)_\Omega  
    = \bigcup_{\T\in F} \big( \{\T\} \times \T^{\N_0} \big),
\end{equation}
 where $F^\T \coloneqq \emptyset$ if $\T\not\in F$ and $F^\T \coloneqq \T^{\N_0}$ if $\T\in F$.

\emph{Reduction of the main event.} 
    Now, in the event $A_{t}^{\T}$ for $\T\in\mathbb{T}$ fixed, there exists a unique first time $s\leq t$ at which the random walk hits
	a connected vertex subset $S \subset \T$ which is $\P_\mu$-almost surely finite and satisfies $|S| \geq t^{1/3}$ and $|\partial S| / |S| <q_t\leq q$ by Lemma~\ref{lem:ms24_cor29} and Remark~\ref{delta_q-pos}. Let $x_{S}$ be the vertex of $S$ which is closest to the root. Since the random walk starts at the root and $\T$ is a tree, we have $\tau_{x_{S}}^\T = s$.
This motivates the definition of the event
	\begin{equation}
		B_{s}^{\T} \coloneqq \Big\{ \,\exists\, S \subset \T \text{ finite and connected } : \tau_{x_S}^\T=s, 
            \; |S|  \ge t^{1/3}, \; \mathrm{iso}^\T_{x_{S}}(S) \le q \Big\} \subseteq \T^{\N_{0}}
	\end{equation}
for $s\in\N_{0}$ and $\T\in\mathbb T$,	where $\mathrm{iso}^\T_{x_{S}}(S)\coloneqq\frac{|\partial_\T S| - \1_{\T \setminus \{o\}}(x_S)}{|S|}$ denotes a corrected isoperimetric ratio of the vertex subset $S$ in $\mathbf T$ in the sense that $\1_{\T \setminus \{o\}}(x_{S})$ disregards one edge of the boundary (the one to the parent of $x_S$) in case $x_{S}$ is not the root.  
We lift the events $B_{s}^{\T}$ to $\Omega$ in the spirit of \eqref{eq:events-Omega} and write
$\what B_{s} \coloneqq (\{B_s^\T\}_{\T\in\mathbb T}\} )_\Omega$ for the corresponding event in $\Omega$. 
We conclude that   	
	\begin{equation}
		\E_{\mu}[P_o^{\mathbf T}(A_t^{\T})] 
		\le \sum_{s=0}^{t} \E_{\mu}[P_o^{\mathbf T}(B_s^{\T})] 
		= \sum_{s=0}^{t} Q_{\mu}(\what B_{s}) 
		= \sum_{s=0}^{t} E_{Q_{\mu}}\big( Q_{\mu}(\what B_{s} | \what X_{s}) \big), 
        \label{eq:conditionalexpectation}
	\end{equation}
	where $\what X_{s}: \Omega \rightarrow \mathcal{X} \times \{S,E\}$ is the map that assigns to any pair consisting of a tree $\T \in\mathbb T$ and of a random-walk trajectory $\omega^{\T} \in\T^{\N_{0}}$ on $\T$ the vertex position $\omega^{\T}_{s}$ (in Ulam--Harris notation) of the trajectory at time $s$ together with the type of the vertex $\omega^{\T}_{s}$ in the tree $\T$. 
    
\emph{Estimating conditional probabilities.}
In view of \eqref{eq:conditionalexpectation}, our goal is to bound the supremum of
\begin{equation}
\label{eq:cond-exp-eval}
	Q_{\mu}\big(\what B_{s} \mid \what X_{s}=(y,f)\big) 
    = Q_{\mu}\big(\what B_{s} \; \big| \; \{y \in \T_{f}\}_\Omega \cap \{X_s =y\}_\Omega\big),  
\end{equation}
over those $(y,f)\in\mathcal X\times \{S,E\}$ for which $Q_\mu\big(\what X_{s}=(y,f)\big) >0$.
Here $\{y \in \T_{f}\}_\Omega$  is defined in the sense of \eqref{eq:Fevents-Omega}, and $\{X_s =y\}_\Omega$ is a shorthand for $(\{X_s^\T =y\}_{\T\in\mathbb T})_\Omega$ in the sense of \eqref{eq:events-Omega}. 
In this last expression we have deliberately written $X_s^\T$ to emphasise the dependence of the random walk on the tree $\T$. 
Likewise, we use $\{\tau_y =s\}_\Omega$ as a shorthand for $(\{\tau_y^\T =s\}_{\T\in\mathbb T})_\Omega$ below.

Let $E_{1}, E_{2}, E_{3}, D_{1}, D_{2}$ be any events in $\Omega$ such that 
$E_{1}\cap E_{2} = D_{1} \cap D_{2}$ and $Q_\mu(E_2\cap E_3) >0$. We recall the elementary identity for the conditional probability  
\begin{equation}
	Q_{\mu}(E_{1} \mid E_{2} \cap E_{3}) 
	= \frac{Q_{\mu}(E_{1} \cap E_{2} \cap E_{3})}{Q_{\mu}(E_{2} \cap E_{3})}
	= \frac{Q_{\mu}(D_{1} \cap D_{2} \mid E_{3})}{Q_{\mu}(E_{2} \mid E_{3})}
\end{equation}
and apply it with 
\begin{equation}
    E_1\coloneqq \what B_{s}, \; E_2\coloneqq\{X_s=y\}_{\Omega}, \;
    E_{3}\coloneqq\{y\in \T_{f}\}_{\Omega}, \;
    D_1\coloneqq\{\tau_y =s\}_\Omega \text{~~and~~} D_2\coloneqq (F_y^+)_{\Omega},
\end{equation}
where
\begin{equation}
	\begin{split}
		F_{y}^{+} 
        &\coloneqq  \Big\{ \T\in\mathbb T: \,\exists\, S \subseteq \prog_{\T}(y) 
	 				\text{ finite and connected with} \\[-1ex]
		&\hspace{5cm}	y \in S, \; |S|  \ge t^{1/3}, \; \mathrm{iso}^\T_{y}(S) \le q \Big\} 
					\in\mathcal F.
	\end{split}
\end{equation}
Here, $\prog_{\T}(y)$ denotes the progeny of a vertex $y\in\T$, that is, the subtree of $\T$ emanating from $y$ as its root.
This yields
\begin{equation}
	\label{eq:cond-prob-apply}
	Q_{\mu} \big(\what B_{s} \; \big| \; \{y \in \T_{f}\}_\Omega \cap \{X_s =y\}_\Omega\big)
	= \frac{Q_{\mu}\big(\{\tau_{y}=s\}_{\Omega} \cap (F_y^+)_\Omega \mid\{y\in \T_{f}\}_\Omega\big)}{Q_{\mu}(\{X_s=y\}_{\Omega} \mid \{y\in \T_{f}\}_\Omega)} .
\end{equation}
We notice that $(\{y\in\T_f\}_\Omega)^\T=\T^{\N_0}$ if $y\in\T_f$, otherwise it is the empty set. Therefore
 \begin{equation}
     Q_\mu\big(\{y\in \T_{f}\}_\Omega\big)=\E_\mu\Big[P_o^\T\Big(\big(\{y\in \T_{f}\}_\Omega\big)^\T\Big)\Big]=\E_\mu \big[\1_{y\in \T_{f}}\cdot P_o^\T\big(\T^{\N_0}\big)\big]=\P_\mu\big[y\in \T_{f}\big].
     \label{eq:label1}
 \end{equation}
We abbreviate the event $H_\Omega  \coloneqq \{\tau_{y}=s\}_{\Omega} \cap (F_y^+)_\Omega \cap \{y\in \T_{f}\}_\Omega$. Similar to before, if $\T\in F_y^+$ and $y\in \T_{f}$, then $(H_\Omega)^\T=\{\tau_{y}^\T=s\}$ and otherwise $H^\T=\emptyset$, hence
\begin{equation}
    \label{eq:label2}
    Q_\mu\big(H_\Omega\big)
    =\E_\mu\big[P_o^\T\big(H_\Omega^\T\big)\big]=\E_\mu\big[\1_{F_y^+}(\T) \1_{y\in \T_{f}} P_o^\T\big(\tau_{y}^\T=s\big)\big].
\end{equation}
We rewrite the conditional probability in the numerator of the right-hand side of \eqref{eq:cond-prob-apply} with \eqref{eq:label1} and \eqref{eq:label2} as 
\begin{equation}
	\label{eq:pre-Markov-apply}
	\begin{split}
		Q_{\mu}\big(\{\tau_{y}=s\}_{\Omega} \cap (F_y^+)_\Omega \mid \{y\in \T_{f}\}_\Omega\big)
        &=\frac{\E_\mu\big[\1_{F_y^+}(\T) \1_{y\in \T_{f}} P_o^\T\big(\tau_{y}^\T=s\big)\big]}{\P_\mu\big(y\in \T_{f}\big)}\\
        &= \E_{\mu}\Big[ \1_{F_{y}^{+}}(\T) P_{o}^{\T}\big(\tau_y^\T =s \big) \;\Big|\; y\in \T_{f} \Big].
	\end{split}
\end{equation}
Now $F_{y}^{+}$ is exactly the kind of event with the same symbol occurring in \eqref{eq:Markov-star} and \eqref{eq:Markov}, and the map 
$\mathbb T \ni \T \mapsto P_{o}^{\T}(\tau_{y}^\T=s)$ is measurable with respect to the sigma algebra generated by all events of the kind $F_{y}^{-}$ in \eqref{eq:Markov-star} and \eqref{eq:Markov}.
Hence, the probability on the right-hand side of \eqref{eq:pre-Markov-apply} factorises due to the tree Markov property \eqref{eq:Markov}, and we get
\begin{equation}
	\label{eq:Markov-apply}
	Q_{\mu}\big(\{\tau_{y}=s\}_{\Omega} \cap (F_y^+)_\Omega \mid \{y\in \T_{f}\}_\Omega\big)
	= \P_{\mu}(F_{y}^{+} \mid y\in\T_{f}) \; 
        \E_{\mu} \big[P_{o}^{\T}(\tau_y^\T =s ) \;\big|\; y\in\T_{f}\big].
\end{equation}
Plugging \eqref{eq:Markov-apply} into \eqref{eq:cond-prob-apply} and observing the identity
$Q_{\mu}\big(\{X_s =y\}_{\Omega} \mid \{y\in \T_{f}\}_\Omega\big)
= \E_{\mu}\big[ P_{o}^{\T}(X_s^\T =y) \;\big|\; y\in \T_{f} \big]$, which follows in analogy to \eqref{eq:pre-Markov-apply}, we infer 
\begin{equation}
	\label{eq:cond-exp2}
	\begin{split}
		Q_{\mu} \big(\what B_{s} \;\big|\; \{y \in \T_{f}\}_\Omega \cap \{X_s =y\}_\Omega\big) 
		&=	\frac{ \E_{\mu} \big[P_{o}^{\T}(\tau_y^\T =s ) \;\big|\; y\in\T_{f}\big]}{\E_{\mu}\big[ P_{o}^{\T}(X_s^\T=y ) \;\big|\; y\in \T_{f} \big]}
			\; \P_{\mu}(F_{y}^{+} \mid y\in\T_{f}) \\
		&\le \P_{\mu}(F_{y}^{+} \mid y\in\T_{f}).
	\end{split}
\end{equation}
Here, we used the inclusion $\{\tau_{y}^\T=s\}  \subseteq \{X_s^\T=y\}$ for every $\T\in\mathbb T$ to obtain the inequality. 

It remains to distinguish the two cases $f\in \{S,E\}$ for the type of $y$ in order to bound the right-hand side of \eqref{eq:cond-exp2}. If $f=S$, that is, $y$ is a survivor, \eqref{eq:shift-S} and Lemma \ref{lem:ms24_lem25}  yield 
\begin{equation}
	\label{eq:B_surv_1}
	\P_{\mu}(F_{y}^{+} \mid y\in\T_{S}) 
	= \P_{\mu}(F_{o}^{+}) 
	= \P_\mu(\AEevent_{\lceil t^{1/3}\rceil})
	\le \exp(-c_{\AEevent} t^{1/3}),
\end{equation}
where $\lceil r \rceil$ is the smallest integer that is not smaller than $r\in\R$.
On the other hand, if $y$ goes extinct, that is, $f=E$, we conclude from \eqref{eq:shift-E} and 
\eqref{eq:finitetreetails} that
\begin{equation}
	\begin{split}
		\label{eq:xi_extinct_root_0}
  	\P_{\mu}(F_{y}^{+} \mid y\in\T_{E}) 
  	&= \P_\mu^{*}(F_{o}^{+} \mid |\T| < \infty) \\
  	&\le \P_\mu^{*}( |\T| \ge t^{1/3} \mid |\T| < \infty)
			\le \exp\big(-c_{\mathrm{fin}}\,t^{1/3}\big).
	\end{split}
\end{equation}
Thus, we deduce from \eqref{eq:xi_extinct_root_0}, \eqref{eq:B_surv_1}, \eqref{eq:cond-exp2} and 
\eqref{eq:cond-exp-eval} that for all $(y,f) \in \mathcal X \times \{S,E\}$ with 
$Q_\mu\big(\what X_s =(y,f)\big) >0$ we have the estimate
\begin{equation}
	Q_{\mu}\big(\what B_{s} \mid \what X_{s}=(y,f)\big) 
	\le \exp\big(-c'' t^{1/3}\big)
\end{equation}
with $c'' \coloneqq \min\{c_{\mathrm{fin}}, \cAE\} >0$. Therefore, the same upper bound holds $Q_\mu$-almost surely for the random variable $Q_{\mu}(\what B_{s} | \what X_{s})$. Now, the claim follows from 
\eqref{eq:conditionalexpectation}.
\end{proof}

The event of returning to the root in $t$ steps without visiting any large $q$-islands can 
be dealt with for each $\T$. We recall that
$q_t = qt^{-1/3} \leq q$.
        
\begin{lem}
    \label{lem:complement}
	For every rooted infinite tree $\mathbf T$ which is locally finite, every $x\in\mathcal X$ and every $t\in\N$, 
    we have 
    \begin{equation}
        P_o^{\mathbf T}\big(\{X_t=x\}\cap  A_t^\complement\big) \leq \sqrt{\frac{\mathrm{deg}_T(x)}{\mathrm{deg}_T(o)}}\cdot \exp\big(- q^2t^{1/3}/18\big).
    \end{equation}
\end{lem}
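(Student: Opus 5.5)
On the event $A_t^\complement$ the walk avoids every $q_t$-island up to time $t$. I would therefore compare the simple random walk with the induced walk $(W_j)$ on the $q_t$-oceans $(\T_{q_t},w_{q_t})$ of Definition~\ref{def:Wt} (with $q$ replaced by $q_t\le q\le 1$), and then apply the norm bound of Theorem~\ref{thm:markovkernelbound}.

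\textbf{Step 1 (coupling).} On $A_t^\complement$ we have $X_j\in\T_{q_t}$ for all $0\le j\le t$. In particular $o\in\T_{q_t}$, and $x\in\T_{q_t}$ whenever $\{X_t=x\}\cap A_t^\complement\neq\emptyset$; otherwise the left-hand side vanishes and there is nothing to prove.

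Consider a trajectory segment of length $t$ that stays in $\T_{q_t}$. Each step $X_j\to X_{j+1}$ is then an edge of $\T$ between two ocean vertices, so $\tau_{\mathrm{oc}}=1$ for the walk restarted at $X_j$. Such a step has probability $1/\deg_\T(X_j)$ under the simple random walk. Under the induced walk the same step has probability $w_{q_t}(X_j,X_{j+1})/w_{q_t}(X_j)$, and by Remark~\ref{rem:wq-prop} this is at least $1/\deg_\T(X_j)$: the numerator is at least $1$ by \eqref{eq:wq-wsrw}, and the denominator equals $\deg_\T(X_j)$.

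Summing over all such paths from $o$ to $x$ of length $t$ gives
\[
P_o^\T\big(\{X_t=x\}\cap A_t^\complement\big)\le P_o^{\T_{q_t}}(W_t=x).
\]

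\textbf{Step 2 (spectral bound).} By \eqref{eq:prob-Markov} and Cauchy--Schwarz in $\ell^2(\T_{q_t},w_{q_t})$,
\[
P_o^{\T_{q_t}}(W_t=x)=\frac{\langle\delta_o,\mathbf P_{q_t}^{\,t}\delta_x\rangle}{w_{q_t}(o)}
\le\frac{\|\delta_o\|\,\|\delta_x\|}{\deg_\T(o)}\,\|\mathbf P_{q_t}\|^t .
\]
Since $\|\delta_y\|=\sqrt{\deg_\T(y)}$, the prefactor equals $\sqrt{\deg_\T(x)/\deg_\T(o)}$, which is exactly the one in the claim.

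Theorem~\ref{thm:markovkernelbound} applies because $\T$ is infinite. It gives
\[
\|\mathbf P_{q_t}\|^t\le\Big(1-\frac{q_t^2}{18}\Big)^t\le\exp\Big(-\frac{q^2t^{-2/3}\,t}{18}\Big)=\exp\big(-q^2t^{1/3}/18\big).
\]

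\textbf{Main obstacle.} The delicate point is Step 1, namely that the path comparison is legitimate. One must check that $\T_{q_t}$ is nonempty and that $w_{q_t}$ is well defined as in Definition~\ref{def:Wt}, which uses $\deg_\T$ as vertex weights. No finiteness of the islands is needed for the one-step comparison, since only direct edges between ocean vertices are used. Theorem~\ref{thm:markovkernelbound} itself is stated for any locally finite infinite tree, so the remaining steps go through as written.
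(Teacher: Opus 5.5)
Your proposal is correct and follows the paper's proof essentially step by step. First you compare paths with the induced walk on $(\T_{q_t},w_{q_t})$; the paper notes that $w_{q_t}=1$ exactly for consecutive ocean vertices because $\T$ is a tree, but your bound $w_{q_t}\ge 1$ from \eqref{eq:wq-wsrw} works equally well. You then apply Cauchy--Schwarz in $\ell^2(\T_{q_t},w_{q_t})$ and Theorem~\ref{thm:markovkernelbound}, with $(1-q_t^2/18)^t\le \exp(-q^2t^{1/3}/18)$, exactly as the paper does.
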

		
\begin{proof}		
Without loss of generality, we assume that $x\in\T\setminus A_{q_t}$, otherwise the probability on the left hand side is $0$.
    By definition of the event $A_t^\complement$, we have
    \begin{equation}
        \label{eq:reduce-to-W}
        P_o^{\mathbf T}\big(\{X_t=x\}\cap  A_t^\complement\big) 
        = P_o^{\mathbf T}\big(X_t=x,X_s\notin \mathcal A_{q_t}\, 
            \forall \, s\in \{0, \ldots, t\} \big)
	    \le P_o^{{\mathbf T}_{q_t}}(W_t=x).    
    \end{equation}
    Here, the second inequality holds because the simple random walk 
    $(X_s)_{s\in\{0,\ldots, t\}}$ jumps only between vertices in 
    $\mathbf T \setminus \mathcal A_{q_t}$, and -- since $\mathbf T$ is a tree and the walk does not enter $q_t$-islands -- no two 
    consecutive vertices $y_1,y_2$ on its path belong to the outer vertex boundary of the same 
    $q_t$-island of $\mathbf T$. Hence, we conclude from Remark~\ref{rem:wq-prop}\kern1pt(i) that $w_{q_t}(y_1,y_2) = 1$ 
    for any two such consecutive vertices, and the inequality arises because the 
    requirements that $(W_s)_{s\in\{0,\ldots, t\}}$ must not jump over $q_t$-islands or 
    stay at a vertex have been dropped in the rightmost probability in \eqref{eq:reduce-to-W}.

    Finally, we rewrite the return probability on the right-hand side of 
    \eqref{eq:reduce-to-W} in terms of the corresponding Markov operator and apply 
    Theorem \ref{thm:markovkernelbound}
	\begin{equation}
		\begin{split}
       P_o^{{\mathbf T}_{q_t}}(W_t=x)
			&= \frac{1}{w_{q_t}(o)}\langle\delta_{o},\mathbf P_{{q_t}}^{\mkern3mu t}\delta_{x} 
            \rangle_{\mathbf T_{q_t},w_{q_t}} \leq \sqrt{\frac{\mathrm{deg}_T(x)}{\mathrm{deg}_T(o)}}\cdot\|\mathbf P_{{q_t}}\|_{\mathbf T_{q_t},w_{q_t}}^{\mkern3mu t}\\
        &\leq \sqrt{\frac{\mathrm{deg}_T(x)}{\mathrm{deg}_T(o)}}\cdot \left(1-\frac{q_t^2}{18}\right)^{t}\leq\sqrt{\frac{\mathrm{deg}_T(x)}{\mathrm{deg}_T(o)}}\cdot \exp\left(-\frac{q^2t^{1/3}}{18}\right).  
        \\[-3ex]
	\end{split}
	\end{equation}    
\end{proof}

The proof of the main theorem now follows as a direct consequence of the previously assembled results.
		
\begin{proof}[Proof of Theorem \ref{thm:returnprobability}]
    The claim is trivially true for $t=0$, so we assume $t\in\N$ from now on. 
    We recall that $q$ is fixed such that Lemma \ref{lem:ms24_lem25} holds. 
    For the time being, we fix $t\in\mathbb N$ such that $t\geq t_0$ and infer from 
    Lemma~\ref{lem:complement} and Lemma~\ref{lem:A_t} that  
	\begin{equation}
	    \begin{split}
		      \E_{\mu}\big[P_o^{\mathbf T}(X_t=x)\big] 
            &= \E_{\mu}\big[P_o^{\mathbf T}(\{X_t=x\}\cap A_t^\complement)\big] 
                + \E_{\mu}\big[P_o^{\mathbf T}(A_t)\big] \\
			&\le \E_\mu\left[\sqrt{\frac{\mathrm{deg}_T(x)}{\mathrm{deg}_T(o)}} \,\right] \exp\big(- q^2t^{1/3}/18\big) +  \exp\big(- c' t^{1/3}\big).
			\end{split}
	\end{equation}
    We note that $\E_\mu\Big[\sqrt{\frac{\mathrm{deg}_T(x)}{\mathrm{deg}_T(o)}}\,\Big]\leq \E_\mu\left[\mathrm{deg}_T(x)\right] < \infty$ because the first moment of the offspring distribution $\mu$ exists. 
    Therefore, there are $t_0'\geq t_{0}$ and $c''>0$, such that for all $t > t_0'$
    \begin{equation}
      \E_{\mu}\big[P_o^{\mathbf T}(X_t=x)\big] \leq \exp\big(-c''  t^{1/3}\big).\label{eq:greatert_0}
    \end{equation}
    We then choose $c_0>0$ sufficiently small such that
    \begin{equation}
    \E_{\mu}\big[P_o^{\mathbf T}(X_t=x)\big] \leq \exp\big(-c_0 ( t_0')^{1/3}\big)\leq \exp\big(-c_0 t^{1/3}\big)   \label{eq:leqt_0}     
    \end{equation}
    for every $t\in\N$ with $t\leq t_0'$.
    This is possible because $\E_{\mu}[P_o^{\mathbf T}(X_t=x)] < 1$ for all such $t$.
    Together, \eqref{eq:greatert_0} and \eqref{eq:leqt_0} yield the theorem with 
    $c\coloneqq\min(c_0,c'')$.
\end{proof}

\section{Proof of Theorem~\ref{thm:lifshits}}
\label{sec:lifshits}



The following lemma will be applied with $\mathbf{G}$ as the giant cluster. It estimates the relevant spectral property of the graph Laplacian in terms of the one of the normalised graph Laplacian.

\begin{lem}
    \label{lem:Delta-DeltaSchlange}
	Let $\mathbf{G}$ be a \emph{finite} connected graph and 
    let $\Delta_{\mathbf{G}}$, respectively 
	$\wtilde\Delta_{\mathbf{G}}$, be 
	the associated graph Laplacian, respectively normalised graph Laplacian. For $E>0$ let $I_{E}$ be any interval whose interior is given by $\mathopen]0,E\mathclose[\,$. Then we have 
	\begin{equation}
		\tr_{\ell^{2}(\G)} \1_{I_{E}}(\Delta_{\mathbf{G}}) 
		\le \tr_{\ell^{2}(\G)} \1_{I_{E}}(\wtilde\Delta_{\mathbf{G}}).
	\end{equation}
\end{lem}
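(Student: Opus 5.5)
The plan is to compare the two operators through the min–max principle, using that $\wtilde\Delta_{\G} = D^{-1/2}\Delta_{\G}D^{-1/2}$, with $D=D_{\G}$ the degree matrix. Since $\G$ is finite and connected, either $|\G|=1$ or every degree is at least $1$. If $|\G|=1$, both Laplacians are the zero $1\times1$ matrix, both traces vanish and the claim is trivial. So assume all degrees satisfy $\deg_{\G}(x)\ge 1$, hence $D\ge \id$ in the sense of quadratic forms.

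Both $\Delta_{\G}$ and $\wtilde\Delta_{\G}$ are positive semidefinite. Their kernels are one-dimensional by connectedness: $\ker\Delta_{\G}$ is spanned by the constant vector $\1$, and $\ker\wtilde\Delta_{\G}=D^{1/2}\ker\Delta_{\G}$ is spanned by $D^{1/2}\1$. Because the interior of $I_E$ is $]0,E[$, the interval $I_E$ excludes $0$ only if it is open at $0$. If $I_E$ contains $0$, the kernel contributes exactly $1$ to both traces, so it suffices to handle the part of the spectrum in $]0,E[$ and possibly the single point $E$. I will therefore reduce to showing
\begin{equation*}
    N(\mu)\le \wtilde N(\mu)\quad\text{and}\quad N_<(\mu)\le\wtilde N_<(\mu)\qquad\text{for all }\mu>0,
\end{equation*}
where $N(\mu)$ and $N_<(\mu)$ denote the numbers of eigenvalues of $\Delta_{\G}$ in $[0,\mu]$ and $[0,\mu[$ respectively, and $\wtilde N$, $\wtilde N_<$ are the same counts for $\wtilde\Delta_{\G}$. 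The count on $]0,E]$ or $]0,E[$ is then $N(E)-1$ or $N_<(E)-1$, and the same holds for the tilded quantities. This covers all four possible choices of endpoints of $I_E$.

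The key step is the eigenvalue comparison $\tilde\lambda_k\le\lambda_k$ for the ordered eigenvalues $\lambda_1\le\dots\le\lambda_N$ of $\Delta_{\G}$ and $\tilde\lambda_1\le\dots\le\tilde\lambda_N$ of $\wtilde\Delta_{\G}$. By Courant–Fischer,
\begin{equation*}
    \tilde\lambda_k=\min_{\dim V=k}\max_{0\ne f\in V}\frac{\langle f,\wtilde\Delta_{\G} f\rangle}{\langle f,f\rangle}.
\end{equation*}
Substituting $f=D^{1/2}g$, which maps $k$-dimensional subspaces bijectively onto $k$-dimensional subspaces, turns the Rayleigh quotient into $\langle g,\Delta_{\G} g\rangle/\langle g,Dg\rangle$. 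This is at most $\langle g,\Delta_{\G} g\rangle/\langle g,g\rangle$, because the numerator is nonnegative and $\langle g,Dg\rangle\ge\langle g,g\rangle$. Taking min–max over $g$ then yields $\tilde\lambda_k\le\lambda_k$. From $\tilde\lambda_k\le\lambda_k$ for every $k$, the counting inequalities $N(\mu)\le\wtilde N(\mu)$ and $N_<(\mu)\le\wtilde N_<(\mu)$ follow immediately, and with them the lemma.

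I expect the main care to be needed in the bookkeeping of the zero eigenvalue and of the endpoint $E$, rather than in the comparison itself. One must check that the multiplicity of $0$ agrees on both sides, which is exactly where connectedness enters, and that the nonnegativity of the Rayleigh numerator is used when dividing by the larger denominator $\langle g,Dg\rangle$. The argument does not require matching eigenvectors, only the min–max characterisation.
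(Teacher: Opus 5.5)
Your proof is correct and follows essentially the same route as the paper: both get the eigenvalue-wise comparison $\tilde\lambda_k\le\lambda_k$ from the min--max principle together with $D_{\G}\ge \id$ (valid once $|\G|\ge 2$), and then handle the endpoint $0$ by matching the kernels of the two operators. The difference is only in the device. The paper writes $\wtilde\Delta_{\G}=a^*a$ with $a=\Delta_{\G}^{1/2}D_{\G}^{-1/2}$, uses $aa^*\le\Delta_{\G}$ together with the isospectrality of $aa^*$ and $a^*a$, and shows $\ker aa^*=\ker\Delta_{\G}$ for the case $0\notin I_E$. Your substitution $f=D^{1/2}g$ in the Rayleigh quotient reaches the same comparison directly, and your kernel count $\dim\ker\wtilde\Delta_{\G}=\dim\ker\Delta_{\G}=1$ is a slightly more transparent way to handle zero.
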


\begin{proof}
	If $|\G|=1$, then $\Delta_{\mathbf{G}} = \wtilde\Delta_{\mathbf{G}}=0$, and the claim is trivial. 
	So let us assume $|\G| \ge 2$ for the rest of this proof. Since $\mathbf{G}$ is finite and connected 
	we infer $\deg_{\mathbf{G}}(v) \in\mathbb N$ for every $v\in V$. We introduce the matrix 
	$a\coloneqq \Delta_{\mathbf{G}}^{1/2} D_{\mathbf{G}}^{-1/2}$ and conclude $0 \le aa^{*} \le \Delta_{\mathbf{G}}$ 
	in the sense of quadric forms. The Min-Max-Theorem leads to component-wise inequalities \cite[Cor.\ III.2.3]{Bhatia97} between the two decreasingly ordered sequences of eigenvalues of $aa^{*}$ and $\Delta_{\mathbf{G}}$. In turn, these eigenvalue inequalities immediately imply
	\begin{equation}
	    \label{eq:delta-aastern}
		\tr_{\ell^{2}(\G)} \1_{I_{E}}(\Delta_{\mathbf{G}}) 
		\le \tr_{\ell^{2}(\G)} \1_{I_{E}}(aa^{*})
	\end{equation}
	if $0 \in I_{E}$. But \eqref{eq:delta-aastern} also holds if $0 \not\in I_{E}$ because 
	the one-dimensional null space $\mathcal{N}$ of $\Delta_{\mathbf{G}}$ coincides with the null space 
	$\what{\mathcal{N}}$ of $aa^{*}$, as we shall argue now: 
    We denote the edge set of $\G$ by $\mathcal E_\G$.
	Since $\|\Delta_{\mathbf{G}}^{1/2}\psi\|^{2} = \langle\psi,\Delta_{\mathbf{G}}\psi\rangle
		= \sum_{\{x,y\} \in \mathcal E_\G} |\psi(x) - \psi(y)|^{2}$ for every $\psi\in\ell^{2}(\G)$, 
	we note first that the null space of $\Delta_{\mathbf{G}}^{1/2}$ coincides with $\mathcal{N}$, whence 
	$\mathcal{N} \subseteq \what{\mathcal{N}}$. In order to show the converse inclusion
	$\what{\mathcal{N}} \subseteq \mathcal{N}$, we pick an arbitrary vector $\psi\in \what{\mathcal{N}}$.
	Hence, $0= \langle \psi, aa^{*}\psi\rangle = \sum_{x\in \G} |\phi(x)|^{2}/\deg_{\mathbf{G}}(x)$, 
	where $\phi \coloneqq\Delta_{\mathbf{G}}^{1/2} \psi$. But this implies $\phi=0$, whence 
	$\psi\in \mathcal{N}$, and \eqref{eq:delta-aastern} is established.
	
	Finally, the characteristic polynomials of the matrices $aa^{*}$ and $a^{*}a = \wtilde{\Delta}_{\mathbf{G}}$ coincide \cite[Thm.\ 1.3.22]{HoJo13} so that $aa^{*}$ and $\wtilde{\Delta}_{\mathbf{G}}$ have the same eigenvalues including multiplicities.  We conclude that  
	\begin{equation}
		\label{eq:aastern-asterna}
		\tr_{\ell^{2}(\G)} \1_{I_{E}}(aa^{*}) 
		= \tr_{\ell^{2}(\G)} \1_{I_{E}}(\wtilde\Delta_{{\mathbf {G}}}),
	\end{equation}
	and the claim follows from \eqref{eq:delta-aastern} and \eqref{eq:aastern-asterna}.    
\end{proof}

The next lemma estimates the average, conditioned on non-extinction, of the low-energy spectral projection of the normalised Laplacian in terms of the annealed return probability of the discrete-time random walk from Theorem~\ref{thm:returnprobability}. It will be applied with $\mu = \mathrm{Poi}(\lambda)$ in the proof of
Theorem~\ref{thm:lifshits}.

\begin{lem}\label{lem:return-specproj}	
	Assume that the first moment $\lambda$ of the offspring distribution $\mu$ exists 
    and satisfies  $\lambda>1$.	Then, there exist constants $E_0 \in \,]0,1[\,$ and $\alpha >0$ such that 
	\begin{equation}  \label{eq:specproj}
		\E_\mu\big[\langle\delta_o, \1_{[0,E]}(\wtilde\Delta_\T) \delta_o\rangle\big] 
        \le \exp\big(-\alpha E^{-1/2}\big)
	\end{equation}
	for every $E\in \,]0,E_0]$.
\end{lem}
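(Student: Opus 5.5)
The plan is to relate the spectral projection of the normalised Laplacian $\wtilde\Delta_\T = \id - \mathbf P$ at the root to return probabilities, using the standard Chebyshev-type inequality for spectral measures. Here $\mathbf P$ denotes the simple-random-walk Markov operator on $\ell^2(\T,\deg_\T)$, and $\wtilde\Delta_\T$ is understood as $\id-\mathbf P$ there, or equivalently via its unitarily equivalent symmetric version $D^{-1/2}\Delta_\T D^{-1/2}$ on $\ell^2(\T)$. Since $\mathbf P$ has norm at most one, $\wtilde\Delta_\T$ has spectrum in $[0,2]$.

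Fix $\T$ and let $\nu_\T$ be the spectral measure of $\mathbf P$ (symmetrised) at $\delta_o$, normalised so that
\[
\int s^t\,\nu_\T(ds) = P_o^\T(X_t=o).
\]
Because $\wtilde\Delta_\T = \id-\mathbf P$ and $\delta_o$ is an eigen-direction of $D^{1/2}$, we have $\langle\delta_o,\1_{[0,E]}(\wtilde\Delta_\T)\delta_o\rangle = \nu_\T([1-E,1])$. For even $t$ the function $s\mapsto s^t$ is non-negative on $[-1,1]$ and at least $(1-E)^t$ on $[1-E,1]$. This gives
\[
\nu_\T([1-E,1]) \le (1-E)^{-t}\, P_o^\T(X_t=o) \qquad\text{for even } t .
\]
Taking $\E_\mu$ and applying Theorem~\ref{thm:returnprobability} with $x=o$ yields
\[
\E_\mu\big[\langle\delta_o,\1_{[0,E]}(\wtilde\Delta_\T)\delta_o\rangle\big] \le \exp\big(-t\ln(1-E) - ct^{1/3}\big).
\]
For $E\le E_0<1/2$ we have $-\ln(1-E)\le 2E$.

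Next optimise over even $t$. Choose $t$ to be an even integer comparable to $\beta E^{-3/2}$. Then $tE\approx\beta E^{-1/2}$ and $t^{1/3}\approx\beta^{1/3}E^{-1/2}$, so the exponent is at most
\[
\big(2\beta-c\beta^{1/3}+o(1)\big)E^{-1/2}.
\]
Taking $\beta$ small enough that $2\beta<c\beta^{1/3}/2$ gives the bound $\exp(-\alpha E^{-1/2})$ with $\alpha\approx c\beta^{1/3}/4$. Rounding $t$ to an even integer changes $tE$ only by $O(E)$, which is harmless. Finally shrink $E_0$ so that $t\ge2$ and all rounding errors are absorbed for every $E\in\,]0,E_0]$.

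The only delicate point is the functional-analytic setup on the infinite tree. We need that the normalised Laplacian is bounded and self-adjoint, which holds since $\mathbf P$ is a contraction on the weighted space. We also need the precise identity linking $\langle\delta_o,\mathbf P^t\delta_o\rangle$ to $P_o^\T(X_t=o)$, including the factor $\deg_\T(o)$ from the weighted inner product as in \eqref{eq:prob-Markov}. This factor cancels because we use the normalised vector $\delta_o/\sqrt{\deg_\T(o)}$, or equivalently work with the symmetrised operator on $\ell^2(\T)$. Once this identification is in place, the rest is the elementary Chebyshev-plus-optimisation argument above; I do not expect a genuine obstacle.
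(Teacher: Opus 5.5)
Your proposal is correct and follows the paper's proof essentially step for step. Both bound $\1_{[0,E]}$ by $(1-E)^{-t}(1-\lambda)^t$ under the spectral measure at $\delta_o$, insert Theorem~\ref{thm:returnprobability} with $x=o$, and take $t$ of order $E^{-3/2}$. Your restriction to even $t$ is more careful than the paper: the spectrum of $\wtilde\Delta_\T$ lies in $[0,2]$, yet the paper states its pointwise bound only for $\lambda\in[0,1]$ and allows its choice $t=\lfloor (2E/c^2)^{-3/2}\rfloor$ to be odd, where the bound fails on $]1,2]$.
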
 


\begin{proof}
    The generator of the discrete-time random walk $\{X_t\}_{t\in\N_0}$ on a locally finite connected 
    graph $\G$ with a countable vertex set (and at least 2 vertices) is determined by the transition probability 
    $P_o^\G(X_{t+1} =y \mid X_t =x) = (D_\G^{-1}A_\G )_{xy}$ for every $x,y \in\G$ and every $t\in\N_0$ so that the return probability on $\G$ is given by 
    \begin{equation}
        P_o^\G(X_t =o) = \langle\delta_o, (\id - \widetilde\Delta_\G)^t\delta_o \rangle
    \end{equation}
    for every $t\in\N_0$. Thus, we infer from Theorem~\ref{thm:returnprobability} that
    \begin{equation}
        \E_\mu \big[ \langle\delta_o, (\id - \widetilde\Delta_\T)^t\delta_o \rangle \big]
        \le \exp\big(-ct^{1/3}\big).
    \end{equation}
    We assume without loss of generality that the constant satisfies $c<1$. Otherwise replace $c$ by some positive number $c'$ such that $c'<1\leq c$ in the rest of this proof. Because of $c<1$ there exists a constant $E_0 \coloneqq E_0(c) \in \,]0,1[\,$ such that $\ln (1-E) \ge - E/c$ for every $E \in [0, E_0]$. This and the estimate $\1_{[0,E]}(\lambda) \le (1-E)^{-t} (1- \lambda)^t$ for every $\lambda \in [0, 1]$ and 
    $E \in \,]0,1[\,$ imply
    \begin{equation}
        \label{eq:spec-proj-thm-apply}
        \E_\mu\big[\langle\delta_o, \1_{[0,E]}(\wtilde\Delta_\T) \delta_o\rangle\big]
        \le \exp\big\{- c (t^{1/3} - tE/c^2) \big\}
    \end{equation}
    for every $E \in [0, E_0]$ and every $t\in\N_0$. Now, we choose $t = \lfloor (2E/c^2)^{-3/2}\rfloor$,
    where $\lfloor r\rfloor$ denotes the largest integer not exceeding $r\in\R$, and 
    observe $(x-1)^{1/3} > x^{1/3} -1$ for any $x>1$
    so that the exponent on the right-hand side of \eqref{eq:spec-proj-thm-apply} is bounded from above by 
    \begin{equation}
        -c \Big[ (2E/c^2)^{-1/2} - 1 - (2E/c^2)^{-3/2} E/c^2\Big] 
        \le  -(c/2)^2 (\sqrt{2} -1) E^{-1/2}.
    \end{equation}
    For the last inequality to hold, we assume that $E_0 \le c^2/16$, whence $1 \le (16E/c^2)^{-1/2} $. The claim follows.  
\end{proof}

Whereas the preceding lemma was concerned with the BGWT-average conditioned on non-extinction for the normalised Laplacian that relies on Theorem~\ref{thm:returnprobability} and holds for general offspring distributions $\mu$, the next lemma treats the BGWT-average conditioned on extinction for the Laplacian, which is much easier and follows from the considerations in \cite{KiKhMu06}. Morally, the behaviour described in the next lemma is due to finite, linear trees. 
We write $\Lambda_\lambda \coloneqq \Lambda(\mathrm{Poi}(\lambda))$ for the extinction probability 
\eqref{eq:ext-prob-def} of the BGWT with offspring distribution $\mu= \mathrm{Poi}(\lambda)$ and recall that $0 < \lambda\Lambda_\lambda <1$ for every $\lambda >1$.

\begin{lem}
    \label{lem:finite-trees}
    For all $\lambda >1$ and $E>0$ we have 
    \begin{equation}
        \frac{\e^{-f^-_\lambda }}{2\lambda\Lambda_\lambda}
        \e^{-2\sqrt{3}f^-_\lambda   E^{-1/2}}
        \le \E^*_{\mathrm{Poi}(\lambda)} \big[\langle\delta_o, \1_{]0,E]}(\Delta_\T) \delta_o\rangle 
            \;\big|\; |\T| < \infty \big] 
        \le \frac{\e^{f^+_\lambda}}{\lambda\Lambda_\lambda}
            \e^{-f^+_\lambda E^{-1/2}},
    \end{equation}
    where $f^\pm_\lambda \coloneqq  \lambda\Lambda_\lambda - \ln (\lambda\Lambda_\lambda) - (1 \pm 1)/2 >0$. 
\end{lem}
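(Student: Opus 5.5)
Conditioned on extinction, the $\mathrm{Poi}(\lambda)$-BGWT is the $\mathrm{Poi}(\lambda\Lambda_\lambda)$-BGWT; this is \eqref{eq:E-multitype_offspring_distributions} with $\mu_E(n)=\mathrm{Poi}(\lambda)(n)\Lambda_\lambda^{n-1}=\mathrm{Poi}(\lambda\Lambda_\lambda)(n)$, using $\e^{-\lambda}=\Lambda_\lambda\e^{-\lambda\Lambda_\lambda}$, which is the fixed-point equation for $\Lambda_\lambda$. With $\nu\coloneqq\lambda\Lambda_\lambda\in\,]0,1[$ the claim therefore concerns subcritical Poisson trees, which are a.s.\ finite. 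I would reproduce the argument of \cite{KiKhMu06} with explicit constants. The convenient identity is $\E^*_{\mathrm{Poi}(\nu)}[\langle\delta_o,\1_{]0,E]}(\Delta_\T)\delta_o\rangle]=\E[N_E(\T)/|\T|]$ up to the usual root-reweighting. Here $N_E(\T)\coloneqq\tr\1_{]0,E]}(\Delta_\T)$, and the weighting identity follows from the unimodularity/mass-transport of Poisson trees: the root is uniform among the vertices of the tree, i.e.\ the law of $\T$ seen from a uniformly chosen vertex equals the law seen from the root. Alternatively I would work directly with the explicit law $\P^*_{\mathrm{Poi}(\nu)}(|\T|=n)=\e^{-\nu n}(\nu n)^{n-1}/n!$ (Borel distribution), which is what produces the prefactors $\nu^{-1}$ and exponents of the form $\nu-\ln\nu$.

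\emph{Upper bound.} I would use Cheeger/path-type bounds: for a connected finite graph with $n$ vertices the smallest nonzero Laplacian eigenvalue is at least $c/n^2$; for trees one has $\lambda_2\ge 1/(n\,\mathrm{diam})\ge 1/n^2$, and $1/n^2$ is enough. Hence $\1_{]0,E]}(\Delta_\T)\ne0$ forces $|\T|\ge E^{-1/2}$, and then $\langle\delta_o,\1_{]0,E]}(\Delta_\T)\delta_o\rangle\le1$. Stirling gives
\begin{equation*}
\P^*(|\T|=n)\le \nu^{-1}\e^{-n(\nu-1-\ln\nu)}.
\end{equation*}
Summing over $n\ge E^{-1/2}$ yields $\nu^{-1}\e^{-f^+(E^{-1/2}-1)}(1-\e^{-f^+})^{-1}$ with $f^+=\nu-\ln\nu-1>0$. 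The factor $(1-\e^{-f^+})^{-1}$ has to be absorbed, either by using the finer Stirling term $n^{-3/2}$ or by keeping the $1/|\T|$ weight. This bookkeeping of constants is the fiddly part, but it is routine.

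\emph{Lower bound.} I would restrict to the event that $\T$ is a path of $n$ vertices with the root at one end (or anywhere along it). This probability is explicit: the root has one child, each internal vertex has one child, and the last vertex has none, giving $(\nu\e^{-\nu})^{n-1}\e^{-\nu}=\nu^{-1}\e^{-n(\nu-\ln\nu)}$, which explains $f^-=\nu-\ln\nu$. On a path of $n$ vertices the Laplacian eigenvalues are $2-2\cos(\pi k/n)=4\sin^2(\pi k/2n)$ with eigenvectors $\cos(\pi k(j-\tfrac12)/n)$, and the root's weight is $\approx 2/n$ per mode. Taking $k=1$ and $n\approx \pi E^{-1/2}$ gives eigenvalue $\le E$ and root weight $\gtrsim 1/n$. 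Choosing $n=\lceil 2\sqrt3 E^{-1/2}\rceil$, so that $\pi^2/n^2\le E$ holds comfortably, then yields the stated bound $\frac{\e^{-f^-}}{2\nu}\e^{-2\sqrt3 f^-E^{-1/2}}$ after absorbing the $1/n$ and ceiling losses into constants. I expect the main obstacle to be matching exactly the stated constants, namely the factor $2\sqrt3$ and the prefactor $1/2$. The way I would handle it is to sum over several admissible $k$ or $n$, or to use $\sum_{k:\,\lambda_k\le E}|\psi_k(o)|^2$ with the elementary bound $\sin x\le x$.
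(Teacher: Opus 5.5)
Your reduction to the subcritical $\mathrm{Poi}(\nu)$-tree with $\nu=\lambda\Lambda_\lambda$ is exactly the paper's first step. The paper then simply quotes Lemmas~3.1 and~4.1 of \cite{KiKhMu06}, so your re-derivation of those estimates is what has to be checked. The upper bound holds up. The Stirling estimate $\P^*_{\mathrm{Poi}(\nu)}(|\T|=n)\le (\nu\sqrt{2\pi})^{-1}n^{-3/2}\e^{-nf^+_\lambda}$ closes the bookkeeping without any geometric-series factor. Only $n\ge E^{-1/2}$ contribute, which for $E<1$ forces $n\ge2$, and $\sum_{n\ge2}n^{-3/2}<\sqrt{2\pi}$. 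For $E\ge1$ the claimed bound exceeds $1$ and is trivial.

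\emph{The lower bound does not go through as written.} With the root pinned at an end of the path you get at best $\frac{2\cos^2(\pi/(2n))}{n\nu}\,\e^{-nf^-_\lambda}$. With $n=\lceil 2\sqrt3E^{-1/2}\rceil$, the ceiling already uses up the prefactor $\e^{-f^-_\lambda}$. You are therefore short by a factor $4\cos^2(\pi/(2n))/n\asymp E^{1/2}$, which is not a constant and cannot be absorbed.

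Summing over modes does not help. At an endpoint of a path, the spectral weight of $]0,E]$ is $O(E^{1/2})$ uniformly in $n$. That loss could at most be paid for by the exponential slack between $\pi$ and $2\sqrt3$, and your choice of $n$ gives that slack away.

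\emph{The missing idea is to let the root sit anywhere on the path.} For $n\ge2$, each of the $n$ root positions has probability $\frac12\nu^{n-1}\e^{-n\nu}$:
\begin{itemize}
\item a root at an end has total probability $\nu^{n-1}\e^{-n\nu}$, shared by the two ends;
\item an interior root has two children, with probability $\nu^2\e^{-\nu}/2$, and this holds for each ordered split of the remaining $n-1$ vertices.
\end{itemize}
Hence paths with $n$ vertices contribute
\[
\tfrac12\nu^{n-1}\e^{-n\nu}\,\tr\1_{]0,E]}(\Delta_{\mathrm{path}_n})\;\ge\;\tfrac{1}{2\nu}\,\e^{-nf^-_\lambda}
\]
as soon as the smallest nonzero eigenvalue of $\Delta_{\mathrm{path}_n}$ is at most $E$. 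This is precisely your identity $\E^*_{\mathrm{Poi}(\nu)}[\langle\delta_o,\1_{]0,E]}(\Delta_\T)\delta_o\rangle]=\E^*_{\mathrm{Poi}(\nu)}[N_E(\T)/|\T|]$. It holds exactly here, with no reweighting, and it is where the prefactor $\frac12$ comes from.

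The constant $2\sqrt3$ comes from the test function $j\mapsto j-\frac{n+1}{2}$, whose Rayleigh quotient is $12/(n(n+1))$. Then $n=\lceil 2\sqrt3E^{-1/2}\rceil\le 2\sqrt3E^{-1/2}+1$ gives the claim.

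Finally, this argument needs $n\ge2$, i.e.\ $E\le12$ (at $E=12$ take $n=2$). For large $E$ the stated lower bound is in fact false when $\nu$ is small. The left-hand side never exceeds $\P^*_{\mathrm{Poi}(\nu)}(|\T|\ge2)=1-\e^{-\nu}$, while the right-hand side tends to $\e^{-\nu}/2$ as $E\to\infty$; for example, $\lambda=3$ gives $\nu\approx0.18$. So both the lemma and your proof should be restricted to bounded $E$, which is all that Theorem~\ref{thm:lifshits} uses.
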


\begin{proof}
    Let $\lambda >1$. For $\mu = \mathrm{Poi}(\lambda)$ the offspring distribution 
    \eqref{eq:E-multitype_offspring_distributions} of vertices going extinct is given by        
    $\mu_E = \mathrm{Poi}(\lambda\Lambda_\lambda)$, where $\lambda\Lambda_\lambda<1$. Hence, it follows from the right equality in
    \eqref{eq:shift-E} and \eqref{BJ-limit} that 
    \begin{equation}
        \E^*_{\mathrm{Poi}(\lambda)} \big[\langle\delta_o, \1_{]0,E]}(\Delta_\T) \delta_o\rangle 
            \;\big|\; |\T| < \infty \big] = \sigma_{\lambda\Lambda_\lambda}(\,]0,E]).       
    \end{equation}
    Thus, the claim follows from Lemmas~3.1 and~4.1 in \cite{KiKhMu06}. We note that an inspection of the proof of Lemma~3.1 in \cite{KiKhMu06} shows that the constant $C$ in its claim can be chosen as $1$. 
\end{proof}

We are now able to conduct the 

\begin{proof}[Proof of Theorem~\ref{thm:lifshits}]
    We fix $\lambda >1$ and a spectral value $E>0$. A decomposition with respect to extinction and survival of the representation 
    \eqref{BJ-limit} for $\sigma_\lambda$ gives
    \begin{equation}
        \label{eq:BJ-apply}
        \begin{split}
            \sigma_\lambda(\,]0,E]) 
            &= \Lambda_\lambda \; \E^*_{\mathrm{Poi}(\lambda)} \big[\langle \delta_{o}, 
                \1_{]0,E]}(\Delta_{\T}) \delta_{o}\rangle \;\big|\; |\T| < \infty\big]  \\
            &\quad + (1 - \Lambda_\lambda) \;\E_{\mathrm{Poi}(\lambda)} \big[\langle \delta_{o}, 
                \1_{]0,E]}(\Delta_{\T}) \delta_{o}\rangle \big].    
        \end{split}
    \end{equation}
    \emph{Lower bound.} \quad 
    After dropping the (non-negative) second term on the right-hand side in \eqref{eq:BJ-apply}, 
    the desired lower bound is the one in Lemma~\ref{lem:finite-trees}. \\
    \emph{Upper bound.} \quad 
    The first term on the right-hand side in \eqref{eq:BJ-apply} is appropriately bounded from above 
    by Lemma~\ref{lem:finite-trees}. It remains to find an analogous upper bound for the expectation in the 
    second line of \eqref{eq:BJ-apply}: We choose a monotone decreasing sequence $(E_n)_{n\in\N} \subset [E, \infty[\,$, which converges to $E$ from above and such that each $E_n$ is neither an atom of the probability measure $\E_{\mathrm{Poi}(\lambda)} \big[\langle \delta_{o}, \1_{\bullet}(\Delta_{\T}) \delta_{o}\rangle \big]$ nor of $\E_{\mathrm{Poi}(\lambda)} \big[\langle \delta_{o}, \1_{\bullet}(\wtilde\Delta_{\T}) \delta_{o}\rangle \big]$. This is possible because both sets of atoms are countable. We also remark that the spectral value $0$ is not an atom of any of the two because $\mathbf T$ is infinite $\P_{\mathrm{Poi}(\lambda)}$-almost surely. Therefore the expectation in the second line of \eqref{eq:BJ-apply} is bounded from above 
    by 
    \begin{equation}
        \label{eq:giant-weak-local}
        \E_{\mathrm{Poi}(\lambda)} \big[\langle \delta_{o}, 
            \1_{]0,E_n]}(\Delta_{\T}) \delta_{o}\rangle \big] 
        = \lim_{N\to\infty} \E_{\lambda}^{(N)} \bigg[ \frac{1}{|\mathbf C_{\mathrm{max}}^{(N)}|} 
            \tr_{\ell^2(\mathbf C_{\mathrm{max}}^{(N)})} \Big\{ \1_{]0,E_n]} (\Delta_{\mathbf C_{\mathrm{max}}^{(N)}})\Big\}\bigg].
    \end{equation}
    The equality in \eqref{eq:giant-weak-local} holds because the giant cluster 
    $\mathbf C_{\mathrm{max}}^{(N)}$ of \ER{} random graphs $\mathcal G(N,\lambda/N)$ with $\lambda>1$ 
    converges in the weak local 
    sense to the ensemble of Bienaym\'e--Galton--Watson trees conditioned on non-extinction with offspring 
    distribution $\mu=\mathrm{Poi}(\lambda)$, see Thms.\ 2.28 and 2.32 in \cite{Hofstad24}, 
    as  well as the proof of Thm.\ 2.34 therein, where the hypothesis of Thms.\ 2.28 and 2.32 is verified 
    for \ER{} graphs. Weak convergence of the spectral measures of the Laplacian as claimed in 
    \eqref{eq:giant-weak-local} then follows along the lines in the proof of \cite[Prop.\ 2.2 (ii)]{Bor17} with the adjacency operator replaced by the Laplacian. The same holds true for the normalised Laplacian 
    $\wtilde\Delta_\T$, as it is even bounded and as local as the Laplacian. This argument becomes relevant after an application of Lemma~\ref{lem:Delta-DeltaSchlange} to the right-hand side of 
    \eqref{eq:giant-weak-local} so that altogether
    \begin{equation}
        \begin{split}
            \E_{\mathrm{Poi}(\lambda)} \big[\langle \delta_{o},  \1_{]0,E]}(\Delta_{\T}) 
                \delta_{o}\rangle \big]
            &\le \lim_{n\to\infty} \lim_{N\to\infty} \E_{\lambda}^{(N)} 
                \bigg[ \frac{N}{|\mathbf C_{\mathrm{max}}^{(N)}|} \; \frac1N
                \tr_{\ell^2(\mathbf C_{\mathrm{max}}^{(N)})} \Big\{ \1_{]0,E_n]} 
                (\wtilde\Delta_{\mathbf C_{\mathrm{max}}^{(N)}})\Big\}\bigg] \\
            &= \E_{\mathrm{Poi}(\lambda)} \big[\langle \delta_{o}, 
                \1_{]0,E]}(\wtilde\Delta_{\T}) \delta_{o}\rangle \big] \\
            &\le \exp\big(-\alpha E^{-1/2}\big),
        \end{split}
    \end{equation}
    where the last step results from an application of Lemma~\ref{lem:return-specproj}. The argument is 
    complete. 
\end{proof}

%
%
%
%


	\bibliography{bibl}

@article{ChChFr86,
  author={Chayes, J. T. and Chayes, L. and Franz, J. R. and Sethna, J. P. and Trugman, S. A.},
  title={On the density of states for the quantum percolation problem},
  journal={J. Phys. A, Math. Gen.},
  fjournal={Journal of Physics A: Mathematical and General},
  volume={19},
  pages={L1173--L1177},
  year={1986},
  doi = {10.1088/0305-4470/19/18/011},
  MRNUMBER = {869300},
}

@article {Virag00,
    AUTHOR = {Vir\'ag, B.},
     TITLE = {Anchored expansion and random walk},
   JOURNAL = {Geom. Funct. Anal.},
  FJOURNAL = {Geometric and Functional Analysis},
    VOLUME = {10},
      YEAR = {2000},
    NUMBER = {6},
     PAGES = {1588--1605},
      ISSN = {1016-443X,1420-8970},
   MRCLASS = {60K99 (60J10)},
  MRNUMBER = {1810755},
MRREVIEWER = {Didier\ Piau},
       DOI = {10.1007/PL00001663},
 }

@misc {HeMuTe-inprep,
    AUTHOR = {Heydenreich, M. and M\"uller, P. and Terveer, S.},
     TITLE = {Continuous-time random walks on {B}ienaym\'e--{G}alton--{W}atson trees},
      YEAR = {2026},
      NOTE = {{I}n preparation},      
}

@article {ArBo24,
    AUTHOR = {Arras, Adam and Bordenave, Charles},
     TITLE = {Existence of absolutely continuous spectrum for
              {G}alton--{W}atson random trees},
   JOURNAL = {Commun. Math. Phys.},
  FJOURNAL = {Communications in Mathematical Physics},
    VOLUME = {403},
      YEAR = {2023},
    NUMBER = {1},
     PAGES = {495--527},
      ISSN = {0010-3616,1432-0916},
   MRCLASS = {60K35 (05C05 60J80 82B44)},
  MRNUMBER = {4645723},
MRREVIEWER = {Guilherme\ Henrique\ de Paula Reis},
       DOI = {10.1007/s00220-023-04798-3},
       URL = {https://doi-org.emedien.ub.uni-muenchen.de/10.1007/s00220-023-04798-3},
}

@article{BePaPf18,
 author = {Benjamini, Itai and Paquette, Elliot and Pfeffer, Joshua},
 title = {Anchored expansion, speed and the {Poisson}-{Voronoi} tessellation in symmetric spaces},
 fjournal = {The Annals of Probability},
 journal = {Ann. Probab.},
 issn = {0091-1798},
 volume = {46},
 number = {4},
 pages = {1917--1956},
 year = {2018},
 language = {English},
 doi = {10.1214/17-AOP1216},
 zbMATH = {6919015},
 Zbl = {1430.60019}
}

@incollection {Bor17,
    AUTHOR = {Bordenave, Charles},
     TITLE = {Spectrum of random graphs},
 BOOKTITLE = {Advanced topics in random matrices},
    SERIES = {Panor. Synth\`eses},
    VOLUME = {53},
     PAGES = {91--150},
 PUBLISHER = {Soc. Math. France, Paris},
      YEAR = {2017},
      ISBN = {978-2-85629-850-3},
   MRCLASS = {60B20 (05C80 15B52)},
  MRNUMBER = {3792625},
}

@article {BoSeVi17,
    AUTHOR = {Bordenave, Charles and Sen, Arnab and Vir\'ag, B\'alint},
     TITLE = {Mean quantum percolation},
   JOURNAL = {J. Eur. Math. Soc. (JEMS)},
  FJOURNAL = {Journal of the European Mathematical Society (JEMS)},
    VOLUME = {19},
      YEAR = {2017},
    NUMBER = {12},
     PAGES = {3679--3707},
      ISSN = {1435-9855,1435-9863},
   MRCLASS = {60K35 (05C80 60B20 60C05)},
  MRNUMBER = {3730511},
       DOI = {10.4171/JEMS/750},
}

@article {BoLeSa11,
    AUTHOR = {Bordenave, Charles and Lelarge, Marc and Salez, Justin},
     TITLE = {The rank of diluted random graphs},
   JOURNAL = {Ann. Probab.},
  FJOURNAL = {The Annals of Probability},
    VOLUME = {39},
      YEAR = {2011},
    NUMBER = {3},
     PAGES = {1097--1121},
      ISSN = {0091-1798,2168-894X},
   MRCLASS = {05C80 (05C50 15B52 47A10 60B20)},
  MRNUMBER = {2789584},
MRREVIEWER = {Marianna\ Bolla},
       DOI = {10.1214/10-AOP567},
}

@article {Salez20,
    AUTHOR = {Salez, Justin},
     TITLE = {Spectral atoms of unimodular random trees},
   JOURNAL = {J. Eur. Math. Soc. (JEMS)},
  FJOURNAL = {Journal of the European Mathematical Society (JEMS)},
    VOLUME = {22},
      YEAR = {2020},
    NUMBER = {2},
     PAGES = {345--363},
      ISSN = {1435-9855,1435-9863},
   MRCLASS = {05C80 (60B20 60C05)},
  MRNUMBER = {4049219},
MRREVIEWER = {Anant\ P.\ Godbole},
       DOI = {10.4171/jems/923},
}

@article {CosteSalez21,
    AUTHOR = {Coste, Simon and Salez, Justin},
     TITLE = {Emergence of extended states at zero in the spectrum of sparse
              random graphs},
   JOURNAL = {Ann. Probab.},
  FJOURNAL = {The Annals of Probability},
    VOLUME = {49},
      YEAR = {2021},
    NUMBER = {4},
     PAGES = {2012--2030},
      ISSN = {0091-1798,2168-894X},
   MRCLASS = {60B20 (05C80 47A10)},
  MRNUMBER = {4260473},
MRREVIEWER = {Zakhar\ Kabluchko},
       DOI = {10.1214/20-aop1499},
}

@article {MR4058984,
    AUTHOR = {Huang, Jiaoyang and Landon, Benjamin},
     TITLE = {Spectral statistics of sparse {E}rd{\H{o}}s--{R}\'enyi graph
              {L}aplacians},
   JOURNAL = {Ann. Inst. Henri Poincar\'e{} Probab. Stat.},
  FJOURNAL = {Annales de l'Institut Henri Poincar\'e{} Probabilit\'es et
              Statistiques},
    VOLUME = {56},
      YEAR = {2020},
    NUMBER = {1},
     PAGES = {120--154},
      ISSN = {0246-0203,1778-7017},
   MRCLASS = {15B52 (05C50 60B20)},
  MRNUMBER = {4058984},
MRREVIEWER = {Thomas\ Kriecherbauer},
       DOI = {10.1214/19-AIHP957},
}

@article {MR2206341,
    AUTHOR = {Bryc, W{\l}odzimierz and Dembo, Amir and Jiang, Tiefeng},
     TITLE = {Spectral measure of large random {H}ankel, {M}arkov and
              {T}oeplitz matrices},
   JOURNAL = {Ann. Probab.},
  FJOURNAL = {The Annals of Probability},
    VOLUME = {34},
      YEAR = {2006},
    NUMBER = {1},
     PAGES = {1--38},
      ISSN = {0091-1798,2168-894X},
   MRCLASS = {60F99 (15A52 60F10 62H10)},
  MRNUMBER = {2206341},
MRREVIEWER = {Martin\ V.\ Hildebrand},
       DOI = {10.1214/009117905000000495},
       URL = {https://doi-org.emedien.ub.uni-muenchen.de/10.1214/009117905000000495},
}

@article {MR2967963,
    AUTHOR = {Jiang, Tiefeng},
     TITLE = {Empirical distributions of {L}aplacian matrices of large
              dilute random graphs},
   JOURNAL = {Random Matrices Theory Appl.},
  FJOURNAL = {Random Matrices. Theory and Applications},
    VOLUME = {1},
      YEAR = {2012},
    NUMBER = {3},
     PAGES = {1250004, 20},
      ISSN = {2010-3263,2010-3271},
   MRCLASS = {05C80 (05C50 60B10 60B20)},
  MRNUMBER = {2967963},
       DOI = {10.1142/S2010326312500049},
}

@article{CampbellORourke24, 
year = {2024}, 
title = {{Spectrum of Lévy–Khintchine random Laplacian matrices}}, 
author = {Campbell, Andrew and O’Rourke, Sean}, 
fjournal = {Journal of Theoretical Probability}, 
journal = {J. Theor. Probab.}, 
issn = {0894-9840}, 
doi = {10.1007/s10959-023-01275-4}, 
pages = {933--973}, 
number = {1}, 
volume = {37}, 
}

@article {MR3945756,
    AUTHOR = {Benaych-Georges, Florent and Bordenave, Charles and Knowles,
              Antti},
     TITLE = {Largest eigenvalues of sparse inhomogeneous {E}rd{\H{o}}s-{R}\'enyi graphs},
   JOURNAL = {Ann. Probab.},
  FJOURNAL = {The Annals of Probability},
    VOLUME = {47},
      YEAR = {2019},
    NUMBER = {3},
     PAGES = {1653--1676},
      ISSN = {0091-1798,2168-894X},
   MRCLASS = {60B20 (05C80 15B52)},
  MRNUMBER = {3945756},
MRREVIEWER = {Longmin\ Wang},
       DOI = {10.1214/18-AOP1293},
}

@article {MR4288336,
    AUTHOR = {He, Yukun and Knowles, Antti},
     TITLE = {Fluctuations of extreme eigenvalues of sparse {E}rd{\H{o}}s--{R}\'enyi graphs},
   JOURNAL = {Probab. Theory Relat. Fields},
  FJOURNAL = {Probability Theory and Related Fields},
    VOLUME = {180},
      YEAR = {2021},
    NUMBER = {3-4},
     PAGES = {985--1056},
      ISSN = {0178-8051,1432-2064},
   MRCLASS = {05C80 (05C50 15B52 60B20)},
  MRNUMBER = {4288336},
MRREVIEWER = {Ke\ Wang},
       DOI = {10.1007/s00440-021-01054-4},
       URL = {https://doi-org.emedien.ub.uni-muenchen.de/10.1007/s00440-021-01054-4},
}

@article {MR4021251,
    AUTHOR = {He, Yukun and Knowles, Antti and Marcozzi, Matteo},
     TITLE = {Local law and complete eigenvector delocalization for
              supercritical {E}rd{\H{o}}s--{R}\'enyi graphs},
   JOURNAL = {Ann. Probab.},
  FJOURNAL = {The Annals of Probability},
    VOLUME = {47},
      YEAR = {2019},
    NUMBER = {5},
     PAGES = {3278--3302},
      ISSN = {0091-1798,2168-894X},
   MRCLASS = {60B20 (05C80 15B52)},
  MRNUMBER = {4021251},
MRREVIEWER = {Debankur\ Mukherjee},
       DOI = {10.1214/19-AOP1339},
       URL = {https://doi-org.emedien.ub.uni-muenchen.de/10.1214/19-AOP1339},
}

@article {MR3098073,
    AUTHOR = {Erd{\H{o}}s, L\'aszl\'o{} and Knowles, Antti and Yau, Horng-Tzer
              and Yin, Jun},
     TITLE = {Spectral statistics of {E}rd{\H{o}}s--{R}\'enyi graphs {I}:
              {L}ocal semicircle law},
   JOURNAL = {Ann. Probab.},
  FJOURNAL = {The Annals of Probability},
    VOLUME = {41},
      YEAR = {2013},
    NUMBER = {3B},
     PAGES = {2279--2375},
      ISSN = {0091-1798,2168-894X},
   MRCLASS = {60B20 (05C80)},
  MRNUMBER = {3098073},
MRREVIEWER = {Longmin\ Wang},
       DOI = {10.1214/11-AOP734},
       URL = {https://doi-org.emedien.ub.uni-muenchen.de/10.1214/11-AOP734},
}

@article {MR2964770,
    AUTHOR = {Erd{\H{o}}s, L\'aszl\'o{} and Knowles, Antti and Yau, Horng-Tzer
              and Yin, Jun},
     TITLE = {Spectral statistics of {E}rd{\H{o}}s--{R}\'enyi {G}raphs {II}:
              {E}igenvalue spacing and the extreme eigenvalues},
   JOURNAL = {Commun. Math. Phys.},
  FJOURNAL = {Communications in Mathematical Physics},
    VOLUME = {314},
      YEAR = {2012},
    NUMBER = {3},
     PAGES = {587--640},
      ISSN = {0010-3616,1432-0916},
   MRCLASS = {82B44 (60B20)},
  MRNUMBER = {2964770},
MRREVIEWER = {Longmin\ Wang},
       DOI = {10.1007/s00220-012-1527-7},
       URL = {https://doi-org.emedien.ub.uni-muenchen.de/10.1007/s00220-012-1527-7},
}

@article {MR4691859,
    AUTHOR = {Alt, Johannes and Ducatez, Raphael and Knowles, Antti},
     TITLE = {Localized phase for the {E}rd{\H{o}}s-{R}\'enyi graph},
   JOURNAL = {Commun. Math. Phys.},
  FJOURNAL = {Communications in Mathematical Physics},
    VOLUME = {405},
      YEAR = {2024},
    NUMBER = {1},
     PAGES = {No. 9, 1--74},
      ISSN = {0010-3616,1432-0916},
   MRCLASS = {05C80 (15A18 60B20)},
  MRNUMBER = {4691859},
MRREVIEWER = {Yilun\ Shang},
       DOI = {10.1007/s00220-023-04918-z},
       URL = {https://doi-org.emedien.ub.uni-muenchen.de/10.1007/s00220-023-04918-z},
}

@article {MR4515695,
    AUTHOR = {Alt, Johannes and Ducatez, Raphael and Knowles, Antti},
     TITLE = {Poisson statistics and localization at the spectral edge of
              sparse {E}rd{\H{o}}s-{R}\'enyi graphs},
   JOURNAL = {Ann. Probab.},
  FJOURNAL = {The Annals of Probability},
    VOLUME = {51},
      YEAR = {2023},
    NUMBER = {1},
     PAGES = {277--358},
      ISSN = {0091-1798,2168-894X},
   MRCLASS = {60B20 (05C80 15B52)},
  MRNUMBER = {4515695},
       DOI = {10.1214/22-aop1596},
       URL = {https://doi-org.emedien.ub.uni-muenchen.de/10.1214/22-aop1596},
}

@article {MR4375917,
    AUTHOR = {Alt, Johannes and Ducatez, Rapha\"el and Knowles, Antti},
     TITLE = {The completely delocalized region of the {E}rd{\H{o}}s--{R}\'enyi
              graph},
   JOURNAL = {Electron. Commun. Probab.},
  FJOURNAL = {Electronic Communications in Probability},
    VOLUME = {27},
      YEAR = {2022},
     PAGES = {No. 10, 1--9},
      ISSN = {1083-589X},
   MRCLASS = {60B20 (05C80)},
  MRNUMBER = {4375917},
MRREVIEWER = {Hai\ Yan\ Chen},
       DOI = {10.1214/22-ECP450},
}

@article {MR4328063,
    AUTHOR = {Alt, Johannes and Ducatez, Raphael and Knowles, Antti},
     TITLE = {Delocalization transition for critical {E}rd{\H{o}}s--{R}\'enyi
              graphs},
   JOURNAL = {Commun. Math. Phys.},
  FJOURNAL = {Communications in Mathematical Physics},
    VOLUME = {388},
      YEAR = {2021},
    NUMBER = {1},
     PAGES = {507--579},
      ISSN = {0010-3616,1432-0916},
   MRCLASS = {05C80 (05C50)},
  MRNUMBER = {4328063},
MRREVIEWER = {Yizhe\ Zhu},
       DOI = {10.1007/s00220-021-04167-y},
}

@article {MR4255147,
    AUTHOR = {Alt, Johannes and Ducatez, Rapha\"el and Knowles, Antti},
     TITLE = {Extremal eigenvalues of critical {E}rd{\H{o}}s--{R}\'enyi graphs},
   JOURNAL = {Ann. Probab.},
  FJOURNAL = {The Annals of Probability},
    VOLUME = {49},
      YEAR = {2021},
    NUMBER = {3},
     PAGES = {1347--1401},
      ISSN = {0091-1798,2168-894X},
   MRCLASS = {60B20 (05C80 15B52)},
  MRNUMBER = {4255147},
MRREVIEWER = {Ke\ Wang},
       DOI = {10.1214/20-aop1483},
}

@Article{Gil59,
    AUTHOR = {Gilbert, E. N.},
     TITLE = {Random graphs},
   JOURNAL = {Ann. Math. Statist.},
  FJOURNAL = {Annals of Mathematical Statistics},
    VOLUME = {30},
      YEAR = {1959},
    PAGES = {1141--1144},
  MRNUMBER = {108839},
MRREVIEWER = {John\ Riordan},
       DOI = {10.1214/aoms/1177706098},
 }

@Article{ErRe60,
  author     = {Erd{\H{o}}s, P. and R\'{e}nyi, A.},
  title      = {On the evolution of random graphs},
  journal    = {Magyar Tud. Akad. Mat. Kutat\'{o} Int. K\"{o}zl.},
  year       = {1960},
  volume     = {5},
  pages      = {17--61},
  mrclass    = {05.40},
  mrnumber   = {0125031},
  mrreviewer = {J. Riordan},
}

@book {Bol01,
    AUTHOR = {Bollob\'{a}s, B\'{e}la},
     TITLE = {Random graphs},
   EDITION = {Second},
 PUBLISHER = {Cambridge University Press, Cambridge},
      YEAR = {2001},
     PAGES = {xviii+498},
      ISBN = {0-521-80920-7; 0-521-79722-5},
   MRCLASS = {05C80 (60C05)},
  MRNUMBER = {1864966},
       DOI = {10.1017/CBO9780511814068},
}

@article {KiKhMu06,
    AUTHOR = {Khorunzhiy, Oleksiy and Kirsch, Werner and M\"uller, Peter},
     TITLE = {Lifshitz tails for spectra of {E}rd{\H{o}}s-{R}\'enyi random
              graphs},
   JOURNAL = {Ann. Appl. Probab.},
  FJOURNAL = {The Annals of Applied Probability},
    VOLUME = {16},
      YEAR = {2006},
    NUMBER = {1},
     PAGES = {295--309},
      ISSN = {1050-5164,2168-8737},
   MRCLASS = {05C80 (15A52 60C05)},
  MRNUMBER = {2209343},
MRREVIEWER = {Mark\ R.\ Jerrum},
       DOI = {10.1214/1050516000000719},
       URL = {https://doi-org.emedien.ub.uni-muenchen.de/10.1214/1050516000000719},
}

@misc{Sattler10,
  title = {High-energy asymptotics of Erd{\H{o}}s--R\'enyi random graphs},
  author = {Sattler, C.},
  note = {Diploma thesis, LMU M\"unchen},
  year = {2010},
}

@article{MullerStern24,
 author = {M{\"u}ller, Peter and Stern, Jakob},
 title = {On the return probability of the simple random walk on {Galton}--{Watson} trees},
 fjournal = {Electronic Journal of Probability},
 journal = {Electron. J. Probab.},
 issn = {1083-6489},
 volume = {30},
 pages = {no. 10, 1--23},
  year = {2025},
 language = {English},
 doi = {10.1214/25-EJP1266},
  MRCLASS = {05C81 (60J80 60K37)},
  MRNUMBER = {4852008},
MRREVIEWER = {Huili\ Liu},
 }

@book {HoJo13,
    AUTHOR = {Horn, Roger A. and Johnson, Charles R.},
     TITLE = {Matrix analysis},
   EDITION = {Second},
 PUBLISHER = {Cambridge University Press, Cambridge},
      YEAR = {2013},
     PAGES = {xviii+643},
      ISBN = {978-0-521-54823-6},
   MRCLASS = {15-01},
  MRNUMBER = {2978290},
MRREVIEWER = {Mohammad\ Sal\ Moslehian},
}

@book {Bhatia97,
    AUTHOR = {Bhatia, Rajendra},
     TITLE = {Matrix analysis},
 PUBLISHER = {Springer-Verlag, New York},
      YEAR = {1997},
     PAGES = {xii+347},
      ISBN = {0-387-94846-5},
   MRCLASS = {15-02 (47-02)},
  MRNUMBER = {1477662},
MRREVIEWER = {R.\ J.\ Bumcrot},
       DOI = {10.1007/978-1-4612-0653-8},
       URL = {https://doi-org/10.1007/978-1-4612-0653-8},
}

@article{Jor08,
 author = {Jorgensen, Palle E. T.},
 title = {Essential self-adjointness of the graph-{Laplacian}},
 fjournal = {Journal of Mathematical Physics},
 journal = {J. Math. Phys.},
 issn = {0022-2488},
 volume = {49},
 number = {7},
 pages = {073510-1--33},
 year = {2008},
 language = {English},
 doi = {10.1063/1.2953684},
 zbMATH = {5497144},
 Zbl = {1152.81496}
}

@article{KeLe10,
 author = {Keller, M. and Lenz, D.},
 title = {Unbounded {Laplacians} on graphs: basic spectral properties and the heat equation},
 fjournal = {Mathematical Modelling of Natural Phenomena},
 journal = {Math. Model. Nat. Phenom.},
 issn = {0973-5348},
 volume = {5},
 number = {4},
 pages = {198--224},
 year = {2010},
 language = {English},
 doi = {10.1051/mmnp/20105409},
 url = {https://eudml.org/doc/197705},
 zbMATH = {5762888},
 Zbl = {1207.47032}
}

@book {AthreyaNey04,
    AUTHOR = {Athreya, K. B. and Ney, P. E.},
     TITLE = {Branching processes},
 PUBLISHER = {Dover, Mineola, NY},
      YEAR = {2004},
     PAGES = {xii+287},
      ISBN = {0-486-43474-5},
   MRCLASS = {60-02 (60J10 60J27 60J45 60J80)},
  MRNUMBER = {2047480},
}

@book {Hofstad17,
    AUTHOR = {van der Hofstad, Remco},
     TITLE = {Random graphs and complex networks. {V}ol. 1},
     PUBLISHER = {Cambridge University Press, Cambridge},
      YEAR = {2017},
     PAGES = {xvi+321},
  MRNUMBER = {3617364},
       DOI = {10.1017/9781316779422},
 }

@book{Hofstad24, 
    place={Cambridge}, 
    title={Random graphs and complex networks. {V}ol. 2}, 
    publisher={Cambridge University Press, Cambridge}, 
    author={van der Hofstad, Remco}, 
    year={2024}, 
    doi={10.1017/9781316795552},    
    url={https://doi.org/10.1017/9781316795552}
}

@book {Harris63,
    AUTHOR = {Harris, Theodore E.},
     TITLE = {The theory of branching processes},
 PUBLISHER = {Springer, Berlin},
      YEAR = {1963},
     PAGES = {xiv+230},
   MRCLASS = {60.67},
  MRNUMBER = {163361},
MRREVIEWER = {P.\ A. P. Moran},
}

@article {Piau98,
    AUTHOR = {Piau, Didier},
     TITLE = {Th\'eor\`eme central limite fonctionnel pour une marche au
              hasard en environnement al\'eatoire},
   JOURNAL = {Ann. Probab.},
  FJOURNAL = {The Annals of Probability},
    VOLUME = {26},
      YEAR = {1998},
    NUMBER = {3},
     PAGES = {1016--1040},
      ISSN = {0091-1798,2168-894X},
   MRCLASS = {60J15 (60F17 60J80 60K35)},
  MRNUMBER = {1634413},
MRREVIEWER = {H.\ Kesten},
       DOI = {10.1214/aop/1022855743},
       URL = {https://doi.org/10.1214/aop/1022855743},
}

@Book{LyonsPeres16,
 Author = {Lyons, Russell and Peres, Yuval},
 Title = {Probability on trees and networks},
 ISBN = {978-1-107-16015-6; 978-1-108-73272-7; 978-1-316-67281-5},
 Year = {2016},
 Language = {English},
 DOI = {10.1017/9781316672815},
 PUBLISHER = {Cambridge University Press, New York},
     PAGES = {xv+699},
  MRNUMBER = {3616205},
MRREVIEWER = {Laurent\ Miclo},
}

@Article{KesteStigu66,
 Author = {Kesten, H. and Stigum, B. P.},
 Title = {A limit theorem for multidimensional {Galton}--{Watson} processes},
 FJournal = {Annals of Mathematical Statistics},
 Journal = {Ann. Math. Statist.},
 ISSN = {0003-4851},
 Volume = {37},
 Pages = {1211--1223},
 Year = {1966},
 Language = {English},
 MRNUMBER = {198552},
    MRREVIEWER = {I.\ J.\ Good},
 DOI = {10.1214/aoms/1177699266},
   }

@article{ChenPeres,
 Author = {Chen, D. and Peres, Y.},
 Title = {Anchored expansion, percolation and speed. {W}ith an appendix by
{G}ábor {P}ete},
 FJournal = {Annals of Probability},
 Journal = {Ann. Probab.},
 Volume = {32},
year={2004},
     PAGES = {2978--2995},
      ISSN = {0091-1798,2168-894X},
       DOI = {10.1214/009117904000000586},
       URL = {https://doi.org/10.1214/009117904000000586},
    NUMBER = {4},
  MRNUMBER = {2094436},
MRREVIEWER = {Timo\ Sepp\"al\"ainen},
}

@article {legall05,
    AUTHOR = {Le Gall, Jean-Fran\c{o}is},
     TITLE = {Random trees and applications},
   JOURNAL = {Probab. Surv.},
  FJOURNAL = {Probability Surveys},
    VOLUME = {2},
      YEAR = {2005},
     PAGES = {245--311},
      ISSN = {1549-5787},
   MRCLASS = {60J80 (05C05 05C80 35J65 60C05 60J65)},
  MRNUMBER = {2203728},
MRREVIEWER = {Endre\ Cs\'aki},
       DOI = {10.1214/154957805100000140},
       URL = {https://doi.org/10.1214/154957805100000140},
}

@article {Aidekon14,
    AUTHOR = {A{\"{i}}d\'ekon, Elie},
     TITLE = {Speed of the biased random walk on a {G}alton-{W}atson tree},
   JOURNAL = {Probab. Theory Relat. Fields},
  FJOURNAL = {Probability Theory and Related Fields},
    VOLUME = {159},
      YEAR = {2014},
    NUMBER = {3-4},
     PAGES = {597--617},
      ISSN = {0178-8051,1432-2064},
   MRCLASS = {60J80 (60F05 60G50)},
  MRNUMBER = {3230003},
MRREVIEWER = {Jyy-I\ Hong},
       DOI = {10.1007/s00440-013-0515-y},
       URL = {https://doi.org/10.1007/s00440-013-0515-y},
}

@incollection {BenArousFribergh16,
    AUTHOR = {Ben Arous, G\'erard and Fribergh, Alexander},
     TITLE = {Biased random walks on random graphs},
 BOOKTITLE = {Probability and statistical physics in {S}t. {P}etersburg},
    SERIES = {Proc. Sympos. Pure Math.},
    VOLUME = {91},
     PAGES = {99--153},
 PUBLISHER = {Amer. Math. Soc., Providence, RI},
      YEAR = {2016},
      ISBN = {978-1-4704-2248-6},
   MRCLASS = {60K37 (05C80 05C81 60J80)},
  MRNUMBER = {3526827},
MRREVIEWER = {Yueyun\ Hu},
       DOI = {10.1090/pspum/091/01536},
       URL = {https://doi.org/10.1090/pspum/091/01536},
}

@article {LyonsPemantlePeres96,
    AUTHOR = {Lyons, Russell and Pemantle, Robin and Peres, Yuval},
     TITLE = {Biased random walks on {G}alton-{W}atson trees},
   JOURNAL = {Probab. Theory Relat. Fields},
  FJOURNAL = {Probability Theory and Related Fields},
    VOLUME = {106},
      YEAR = {1996},
    NUMBER = {2},
     PAGES = {249--264},
      ISSN = {0178-8051,1432-2064},
   MRCLASS = {60J80 (60J15)},
  MRNUMBER = {1410689},
MRREVIEWER = {Ben\ Hambly},
       DOI = {10.1007/s004400050064},
       URL = {https://doi.org/10.1007/s004400050064},
}

@article {DemboEtAl02,
    AUTHOR = {Dembo, Amir and Gantert, Nina and Peres, Yuval and Zeitouni,
              Ofer},
     TITLE = {Large deviations for random walks on {G}alton-{W}atson trees:
              averaging and uncertainty},
   JOURNAL = {Probab. Theory Relat. Fields},
  FJOURNAL = {Probability Theory and Related Fields},
    VOLUME = {122},
      YEAR = {2002},
    NUMBER = {2},
     PAGES = {241--288},
      ISSN = {0178-8051,1432-2064},
   MRCLASS = {60K37 (60F10 60J80 82B41 82B44)},
  MRNUMBER = {1894069},
MRREVIEWER = {Boualem\ Djehiche},
       DOI = {10.1007/s004400100162},
       URL = {https://doi.org/10.1007/s004400100162},
}
	\bibliographystyle{mybib} 
	\vspace{1cm}
	
\end{document}